\newtheoremstyle{mystyle}
    {}
    {}%
    {\itshape}%
    {}%
    {\bfseries}%
    {.}%
    { }%
    {\thmname{#1}\thmnumber{ #2}\thmnote{ #3}}
\theoremstyle{mystyle}
\newtheorem{thm}{Theorem}[section]
\newtheorem{prop}[thm]{Proposition}
\newtheorem{cor}[thm]{Corollary}
\newtheorem{lem}[thm]{Lemma}
\newtheorem{conj}[thm]{Conjecture}
\theoremstyle{definition}
\newtheorem{rem}[thm]{Remark}
\def\4{\mathop{4 \mathrm{f}}\nolimits}
\def\12{\mathop{X_{12}}\nolimits}
\def\rank{\mathop{\mathrm{rank}}\nolimits}
\def\dim{\mathop{\mathrm{dim}}\nolimits}
\def\Im{\mathop{\mathrm{Im}}\nolimits}
\def\Hom{\mathop{\mathrm{Hom}}\nolimits}
\def\Aut{\mathop{\mathrm{Aut}}\nolimits}
\def\Tor{\mathop{\mathrm{Tor}}\nolimits}
\def\sgn{\mathop{\mathrm{sgn}}\nolimits}
\def\SL{\mathop{\mathrm{SL}}\nolimits}
\newcommand{\mf}[1]{{\mathfrak{#1}}}
\newcommand{\mb}[1]{{\mathbf{#1}}}
\newcommand{\mca}[1]{{\mathcal{#1}}}
\newcommand{\ra}{\rightarrow}
\newcommand{\Q}{{\Bbb Q}}
\newcommand{\R}{{\Bbb R}}
\newcommand{\Z}{{\Bbb Z}}
\newcommand{\N}{{\Bbb N}}
\newcommand{\F}{{\Bbb F}}
\newcommand{\C}{{\Bbb C}}
\begin{document}
\title{
Adjoint Reidemeister torsions of some 3-manifolds obtained by Dehn surgeries
}
\author{Naoko Wakijo\footnote{
E-mail address: {\tt wakijo.n.aa@m.titech.ac.jp}}}
\date{}
\pagestyle{plain}

\maketitle

\begin{center}
\normalsize

{\bf Keywords}:
\ \ \ Reidemeister torsion, $3$-manifolds, surgery \footnote{AMS 2010 Mathematics Subject; Primary: 57K30, 57K31, 57Q10, Secondary: 57M05}
\end{center}

\begin{abstract}
We determine the adjoint Reidemeister torsion of a $3$-manifold obtained by some Dehn surgery along $K$, where $K$ is either the figure-eight knot or the $5_2$-knot.
As in a vanishing conjecture \cite{BGP-JHEP20, GKP-JHEP20,GKY-ATMP21},
we consider a similar conjecture and show that the conjecture holds for the 3-manifold. 
\end{abstract}
\section{Introduction}
Let $\mf{g}$ be the Lie algebra of a semisimple complex Lie group $G$, and $M$ be a connected compact oriented manifold.
Let $R^{\rm irr}_G(M)$ be the (irreducible) character variety, that is, the set of conjugacy classes of irreducible representations $\pi_1(M) \rightarrow G$. 
Given a homomorphism $ \varphi : \pi_1(M) \ra G $, we can define the {\it adjoint (Reidemeister) torsion} $\tau_\varphi(M)$ {under a mild assumption}, which lies in $\C^\times$ and is determined by the conjugacy class of $\varphi$; see \cite{Turaev01} or Section \ref{secReview} for details.
When $ \dim M =2$, the torsion plays an interesting role as a volume form on the space $R^{\rm irr}_G(M)$; see \cite{PortiMAMS97, WittenCMP91}.
In addition, if $M$ is 3-dimensional and $G= \SL_2(\C)$,
some attitudes of the torsions in $R^{\rm irr}_G(M)$ are physically observed from the viewpoint of a 3D-3D correspondence,
and some conjectures on the torsions are mathematically proposed {in} \cite{BGP-JHEP20,GKP-JHEP20,GKY-ATMP21}.

{For instance, with reference to \cite{GKY-ATMP21}, the conjecture can be roughly described as follows.
Suppose that $\dim M=3$ and $M$ has a tori-boundary.
For $z \in \C$, introduce a finite subset ``$\mathrm{tr}_{\gamma}^{-1}(z)$" of $R^{\rm irr}_G(M)$ which is defined from a boundary condition,
and discusse the sum of the $n$-th powers of the twice torsions, that is, $\sum_{\varphi \in \mathrm{tr}_{\gamma}^{-1}(z) }(2\tau_{\varphi}(M ))^n \in \C$ for $n \in \Z $ with $n \geq -1$.
Then, the studies in \cite{BGP-JHEP20,GKP-JHEP20,GKY-ATMP21} suggest that the sum lies in $\Z$ and, that if $M$ is hyperbolic and $n=-1$, then the sum is zero.} This conjecture is sometimes called {\it the vanishing identity}; see \cite{porti2021adjoint,tran2023adjoint,Yoon22} and references therein for supporting evidence of this conjecture.

In this paper, we focus on the adjoint torsions in the case where $\dim{M}=3$ and $M$ has no boundary.
According to {\cite{BGP-JHEP20,GKP-JHEP20}},
it is seemingly reasonable to consider the following conjecture: 
\begin{conj}[{{\rm{\cite{BGP-JHEP20,GKP-JHEP20}}}}]\label{conj1}
Take $n \in \Z$ with $n \geq -1$.
Suppose that $M$ is {a} closed $3$-manifold, and the set $R^{\rm irr}_G(M)$ is finite.
Then, the following sum lies in the ring of integers $\Z$:
\begin{equation}\label{key1}\sum_{\varphi \in R^{\rm irr}_G(M)} ( 2 \tau_{\varphi}(M))^{n} . \end{equation}
Furthermore, if $G=\SL_2(\C)$, $M$ is a hyperbolic 3-manifold, and $n=-1$, then the sum is zero.
\end{conj}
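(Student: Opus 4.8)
The plan is to treat $R^{\rm irr}_G(M)$ not as a bare finite set but as the set of $\C$-points of a reduced $0$-dimensional affine scheme $\mca{X}$ defined over $\Q$ (and, one hopes, over $\Z$): one cuts out the representation scheme $\Hom(\pi_1(M),G)$ inside an affine space by the matrix relations coming from a finite presentation of $\pi_1(M)$, forms the GIT quotient by the conjugation action, and restricts to the irreducible locus. The assignment $\varphi \mapsto 2\tau_\varphi(M)$ is then a regular (or at worst algebraic) function $f$ on $\mca{X}$, because the adjoint twisted chain complex of Section~\ref{secReview} has differentials that are polynomial in the matrix coordinates, and the relevant transition $\det$'s take values in $\C^\times$ at irreducible points. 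With this setup the power sum \eqref{key1} is exactly the trace of multiplication by $f^{n}$ on the finite-dimensional $\Q$-algebra $A = \Gamma(\mca{X}, \mca{O}_{\mca{X}})$, so that $\sum_{\varphi} (2\tau_\varphi(M))^n = \tr_{A/\Q}(f^n)$. Since $A$ is a $\Q$-algebra, this trace is automatically rational, which settles the field-of-definition issue for free.

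To upgrade rationality to integrality I would show that $f = 2\tau$ is \emph{integral over} $\Z$ as an element of $A$. The factor $2$ is essential and reflects the normalization of the torsion by the order of the center together with the sign and volume conventions of Section~\ref{secReview}; heuristically it clears the denominators introduced by the half-integral weights of the adjoint representation. Concretely I would build the adjoint complex over the ring $\mca{O}$ of integral functions on the representation scheme and show that, after the canonical normalization, $2\tau$ is a ratio of minors that are monic up to units, hence lies in the integral closure $\overline{\Z}$ of $\Z$ in $A$. Granting this, $f^n \in \overline{\Z}$ for $n \geq 0$; for $n=-1$ one needs in addition that $f$ is an algebraic \emph{unit}, i.e.\ both $f$ and $f^{-1}$ lie in $\overline{\Z}$, which is expected since $\tau \in \C^\times$ at irreducible points. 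The trace of an element integral over $\Z$ is an algebraic integer, and an algebraic integer that is rational lies in $\Z$; this would give the first half of the conjecture for all $n \geq -1$.

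For the vanishing half ($G=\SL_2(\C)$, $M$ hyperbolic, $n=-1$) I would use that every closed oriented $3$-manifold is Dehn surgery on a link $L \subset S^3$, so $M$ is a filling of $E_L = S^3\setminus L$ along some multislope. On the character variety $X_L$ of $E_L$ the reciprocal torsion admits a residue presentation: by a surgery and change-of-curve formula, $1/(2\tau_\varphi(M))$ equals, up to the Jacobian of the boundary trace functions, the value at $\varphi$ of a meromorphic top-degree form $\omega$ on $X_L$, while $R^{\rm irr}_G(M)$ is the zero-locus in $X_L$ of the system of filling equations $\mathrm{tr}_{\mu_i^{p_i}\lambda_i^{q_i}}=\pm 2$, one per filled torus. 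Then $\sum_{\varphi} 1/(2\tau_\varphi(M))$ is a sum of Grothendieck (toric) residues of $\omega$ over this complete-intersection fiber — the Euler--Jacobi theorem when $L$ is a knot and $X_L$ is a curve, the toric residue theorem in general — and a global residue theorem forces the total to vanish provided there is no contribution from the ideal points of $X_L$.

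The crux, and the step I expect to be the main obstacle, is precisely the control of the \emph{ideal points} and the compactification of $X_L$: one must show that $\omega$ carries no residue along the boundary divisor of a smooth projective model, so that the residues at the affine points of $R^{\rm irr}_G(M)$ sum to zero. This is exactly where hyperbolicity must enter — through Culler--Shalen theory of ideal points, the nondegeneracy of the cusp shape at the discrete faithful representation, and the asymptotics of the adjoint torsion near reducible and ideal points — but turning this into an argument uniform over \emph{all} hyperbolic closed $M$ (rather than the figure-eight and $5_2$ surgeries treated below) requires a new transversality and vanishing input at the ideal points that I do not see how to supply in full generality; the integral-closure claim for $2\tau$ is likewise delicate once $\pi_1(M)$ has a complicated presentation. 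For these reasons I would expect to reduce the general conjecture to these two structural assertions rather than to close it outright.
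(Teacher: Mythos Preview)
The statement you are attempting to prove is labeled a \emph{conjecture} in the paper, not a theorem: the paper does not claim a proof of Conjecture~\ref{conj1} in general, and your proposal does not provide one either---as you yourself concede in the final paragraph. What the paper actually establishes are special cases: Theorem~\ref{thm1} (the vanishing identity for $M=S^3_{p/1}(4_1)$ and $S^3_{1/q}(4_1)$) via explicit computation, the analogous statement for $S^3_{1/q}(5_2)$ in Section~\ref{sec52}, and Proposition~\ref{prop4} (a weakened integrality statement, for $8\tau_\varphi$ rather than $2\tau_\varphi$, with an additional global factor of~$2$). There is no ``paper's own proof'' of the full conjecture to compare against.

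That said, your strategy has two genuine gaps that the paper's partial results illuminate. First, the integrality of $2\tau_\varphi$ over $\Z$ is not merely ``delicate''---the evidence in the paper cuts against your heuristic. Even for the concrete family $S^3_{2m/1}(4_1)$, Proposition~\ref{prop4} only obtains $2\sum_\varphi(8\tau_\varphi)^n\in\Z$; the extra factor $2^3$ arises from the cube $(1-a^2)^3$ in the denominator of~\eqref{Eq:tor1}, and the paper explicitly remarks that reducing this $2$-torsion to reach $(2\tau_\varphi)^n$ ``seems not easy''. Your claim that a single factor of~$2$ ``clears the denominators introduced by the half-integral weights'' is thus not borne out by the one case where everything is written down. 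Second, for the vanishing half, your residue-theoretic reduction is in the same spirit as the paper's method (Lemma~\ref{lem5} is precisely Jacobi's residue theorem), but the paper succeeds only because it has $\tau_{\varphi_a}$ in closed form and can verify by hand the degree bound $\deg g \le \deg k - \varepsilon\eta - 2$, with separate arguments depending on $p \bmod 4$ and a nontrivial no-repeated-roots lemma (Lemma~\ref{lem4}). The ``no contribution from ideal points'' step you flag is exactly what is not automatic, and nothing in the paper supplies a uniform mechanism for it.

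In short: your proposal is a plausible outline of what a general argument \emph{might} look like, but it is a research program rather than a proof, and the paper treats the statement as open.
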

\noindent In \cite{Shawn23}, when $G=\SL_2(\mathbb{C})$, {the adjoint torsion of certain Seifert 3-manifolds and torus bundles are explicitly computed}; thus, we can easily check the conjecture for the non-hyperbolic 3-manifolds.

In contrast, this paper provides supporting evidence on Conjecture \ref{conj1} in hyperbolic cases.
For $p/q \in \Q$ and a knot $K$ in $S^3$, let $S^3_{p/q}(K)$ be the closed $3$-manifold obtained by $(p/q)$-Dehn surgery on $K$.
\begin{thm}\label{thm1}
Let $G$ be $\SL_2(\C)$, and $K=4_1$ be the figure-eight knot. Let $n=-1$. Then, for any integers $p$ and $q\neq0$, Conjecture \ref{conj1} is true when 
$M=S^3_{p/1}(4_1)$ and $M=S^3_{1/q}(4_1)$.
\end{thm}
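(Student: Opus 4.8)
The plan is to reduce the sum in Conjecture \ref{conj1} (for $n=-1$) to a finite sum of one fixed meromorphic function over the $\SL_2(\C)$-character curve of the figure-eight knot complement, and then evaluate that sum by the residue theorem. Throughout write $X=S^3\setminus 4_1$; recall that $4_1$ is amphichiral, so $S^3_{p/q}(4_1)\cong S^3_{-p/q}(4_1)$, and we may assume $p\ge 0$, resp. $q\ge 1$.

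\textbf{Setting up the two character varieties.}
First I recall the classical description of the irreducible part $R^{\rm irr}_{\SL_2(\C)}(X)$: presenting the knot group as $\langle a,b\mid wa=bw\rangle$ with $a,b$ meridians and putting $x=\tr\rho(a)=\tr\rho(b)$, $u=\tr\rho(ab)$, the irreducible characters form an affine plane curve $C\colon\Phi(x,u)=0$; let $\widehat C$ be its smooth projective model. On $C$ I record the explicit expressions for the meridian eigenvalue $M$ and the longitude eigenvalue $L$ --- equivalently the (geometric-component) $A$-polynomial relation $A_{4_1}(M,L)=0$ --- and the explicit algebraic formula for the adjoint Reidemeister torsion of $X$ with respect to the meridian, $\tau_\rho(X;\mu)$, which for the figure-eight is known (see, e.g., \cite{porti2021adjoint,tran2023adjoint}). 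Next, since $\pi_1(S^3_{p/q}(4_1))=\pi_1(X)/\langle\langle\mu^p\lambda^q\rangle\rangle$, an irreducible $\SL_2(\C)$-character of $S^3_{p/q}(4_1)$ is an irreducible character of $\pi_1(X)$ killing $\mu^p\lambda^q$; away from the abelian locus and finitely many non-central boundary characters these are precisely the points of $C$ with $M^pL^q=1$. Substituting $L=M^{-p}$, resp.\ $M=L^{-q}$, into $A_{4_1}(M,L)=0$ and clearing denominators produces, in each of the two families, a single polynomial equation $P_{p/q}=0$ on $C$ whose solution set corresponds, finite-to-one and explicitly, to $R^{\rm irr}(S^3_{p/q}(4_1))$. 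Because the figure-eight $A$-polynomial is irreducible on the geometric component, this set is always finite, so the hypothesis of Conjecture \ref{conj1} holds (note also that the $1/q$-surgeries produce integral homology spheres). The finitely many non-hyperbolic members --- $S^3_{p/1}(4_1)$ for $p\in\{0,\pm1,\pm2,\pm3,\pm4\}$ and $S^3_{1/q}(4_1)$ for $q=\pm1$ --- are Seifert-fibered or Sol manifolds, for which the required integrality of the $n=-1$ sum follows from \cite{Shawn23} (the vanishing clause does not apply); so from now on assume $M=S^3_{p/q}(4_1)$ is hyperbolic.

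\textbf{Surgery formula and residue evaluation.}
For $\varphi$ induced from $\rho\in C$ I invoke the Dehn-filling formula for the adjoint torsion (Porti; cf.\ \cite{porti2021adjoint,tran2023adjoint,Yoon22}), of the shape
\[
2\,\tau_\varphi\bigl(S^3_{p/q}(4_1)\bigr)=\pm\,\frac{2\,\tau_\rho(X;\gamma')}{\tr\rho(\gamma')^2-4},
\]
where $\gamma'$ is a boundary curve dual to the filling slope $\gamma=\mu^p\lambda^q$, together with the change-of-curve formula, which rewrites $\tau_\rho(X;\gamma')$ as $\tau_\rho(X;\mu)$ times a ratio of logarithmic derivatives of $M$ and $L$ along $C$ (computable from $A_{4_1}$ by implicit differentiation). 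Combined with the formulas above, this exhibits $2\,\tau_\varphi(S^3_{p/q}(4_1))$ as one fixed meromorphic function $R$ on $\widehat C$ --- depending on $p/q$ only through elementary factors --- evaluated at the point associated to $\varphi$. Hence
\[
\sum_{\varphi\in R^{\rm irr}(M)}\bigl(2\,\tau_\varphi(M)\bigr)^{-1}=\sum_{P_{p/q}=0}\frac1R=\sum_{P_{p/q}=0}\ \mathrm{Res}\Bigl(\tfrac1R\,\tfrac{dP_{p/q}}{P_{p/q}}\Bigr),
\]
and by the residue theorem on $\widehat C$ this equals minus the sum of the residues of the $1$-form $\omega=\frac1R\frac{dP_{p/q}}{P_{p/q}}$ at its remaining poles: the zeros of $R$ (the non-regular and excluded characters) and the ideal points of $\widehat C$. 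I will compute each of these finitely many boundary residues explicitly from the figure-eight $A$-polynomial and torsion function, show each is an integer, and show that in the hyperbolic case they cancel in pairs so that the total is $0$; this yields both assertions of Conjecture \ref{conj1} for $M$. (Integrality alone can also be seen directly: the sum is a symmetric function of the Galois-conjugate characters cut out by $P_{p/q}\in\Z[\,\cdot\,]$, hence rational, and integral by control of denominators.) The two families $S^3_{p/1}$ and $S^3_{1/q}$ are treated by the same scheme, with the roles of $M$ and $L$ interchanged.

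\textbf{Main obstacle.}
The substantive work lies in the last two steps. First, pinning down the normalization of the surgery formula: the adjoint torsion is defined only up to sign and relative to a choice of boundary curve, and the solid-torus contribution --- as well as the behaviour at non-generic characters where $\rho|_{\partial X}$ fails to be diagonalizable --- must be tracked carefully. Second, and more seriously, showing that the residues of $\omega$ at the ideal points of $\widehat C$ and at the points over $M\in\{0,\infty\}$ are integers that cancel in the hyperbolic case is not a formal consequence of the residue theorem: it genuinely uses the explicit figure-eight $A$-polynomial, the explicit formula for $\tau_\rho(X;\mu)$, and the precise asymptotics of both (and of the logarithmic-derivative factor) near those points.
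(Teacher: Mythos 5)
Your write-up is a strategy outline rather than a proof: the two steps you yourself flag as ``the substantive work'' --- computing the residues of $\omega$ at the ideal points and at the zeros of $R$, showing they are integers, and showing they cancel in the hyperbolic case --- are exactly where the content of the vanishing identity lives, and they are nowhere carried out. Worse, there is a specific case in which the plan, as formulated, breaks down. When $4\mid p$ the filling slope picks up the characters with meridian eigenvalue $\pm\sqrt{-1}$: for the figure-eight one has $A_{4_1}(\pm\sqrt{-1},L)=-(L-1)^2$, and at $(M,L)=(\pm\sqrt{-1},1)$ both partial derivatives of $A_{4_1}$ vanish, so these are non-generic points (longitude of trace $2$, branch/singular points of the eigenvalue coordinates) where the curve $\{M^pL^q=1\}$ does not meet the character curve transversally. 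There the regularity hypotheses behind Porti's surgery formula and the change-of-curve formula are not available for free, so your identification of $2\tau_\varphi$ with a single meromorphic function $R$ is unjustified precisely at the points that contribute when $p=4m$. This is the case singled out in the paper's Introduction footnote: the known argument along your lines (Yoon's, \cite{Yoon}) works only under a generic condition excluding $(p,q)=(4m,1)$, whereas the theorem claims all $p$. The paper avoids this by computing the torsion of the closed manifold directly from a presentation (Theorem \ref{thm2}), obtaining the separate value \eqref{Eq:tor2} at $a=\pm\sqrt{-1}$, and then absorbing the degenerate factor into the modified residue statement (Lemma \ref{lem5} with $\eta=1$, $\varepsilon=2$, applied to $Q_M(x)/(1+x^2)$); your outline contains no substitute for this step.

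Two further gaps. First, your disposal of the non-hyperbolic cases via \cite{Shawn23} does not cover $p=\pm4$: $S^3_{\pm4/1}(4_1)$ is toroidal (a graph manifold), neither Seifert fibered nor a torus bundle, so the computations cited there do not apply; the paper settles $|p|\le 4$ by direct evaluation of its torsion formula (e.g.\ $\sum_\varphi \tau_\varphi(M)^{-1}=8$ for $p=\pm4$). Second, rewriting $\sum 1/R$ over $\{P_{p/q}=0\}$ as a sum of residues of $\frac{1}{R}\frac{dP_{p/q}}{P_{p/q}}$ counts each point with its multiplicity, so you need the zeros of $P_{p/q}$ on the curve to be simple; this is a genuine arithmetic statement (the analogue in the paper is the no-repeated-root lemma of Section \ref{seclem}, whose proof for all $p,q$ takes real work), and you never address it. Finally, the parenthetical remark that integrality ``can also be seen directly'' by Galois symmetry only yields rationality; integrality requires exactly the denominator control you postpone. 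To turn your route into a proof you would have to fix the normalization of the surgery and change-of-curve formulas, prove simplicity of the intersection, carry out the ideal-point residue computations, and separately handle the non-generic characters when $4\mid p$ --- the last item being the genuinely new part of the theorem.
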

\noindent
{We similarly discuss whether Conjecture \ref{conj1} is true for $M=S^3_{1/q}{(K)}$ when the knot $K$ is the $5_2$-knot; see Section \ref{sec52}.}

The outline of the proof is as follows. 
While some computations of the adjoint torsions of 3-manifolds with boundary are established (see, e.g., \cite{Dubois09,tran2023adjoint,Yoon22}),
this paper employs
a procedure of computing the adjoint torsions of closed 3-manifolds, which is established in \cite{NW1},
and we determine all the adjoint torsion (Theorems \ref{thm2-1} and \ref{thm2-2}).
As in the previous proof of the above supporting evidence,
we apply Jacobi's residue theorem (see Lemma \ref{lem5}) to the sum \eqref{key1} and demonstrate Theorem \ref{thm1}. 
Since it is complicated to check the condition for applying the residue theorem, we need some careful discussion (see Sections \ref{seclem}--\ref{subsecProof})\footnote{As a private communication with S. Yoon, he tells us another proof of Conjecture \ref{conj1} with $M= S^3_{p/q}(K)$ in generic condition. Here, we emphasize that, while the condition does not contain the case $(p,q)=(4m,1)$ for some $m \in \Z$, Theorem \ref{thm1} deals with all $p.$
}.
Finally, in Section \ref{secother}, we also discuss the conjecture with $n >0$, and
see that some properties are needed to be addressed in future studies.
Here, we show the $2^{2n+1}$-multiple of the conjecture with $M=S^3_{2m/1}(4_1)$; see Proposition \ref{prop4}.

\section{Review; the adjoint Reidemeister torsion}\label{secReview}
After reviewing algebraic torsions in Section \ref{subsecTor}, we briefly recall the definition of the adjoint Reidemeister torsion in Section \ref{subsecAdj}.
{We note that our definition of the adjoint torsion is of sign-refined type.}
Section \ref{subsecPres} explains cellular complexes of $M$.
Throughout this paper, we assume that any basis of a vector space is ordered.

\subsection{Algebraic torsion of a cochain complex}\label{subsecTor}

Let $C^*$ be a bounded cochain complex consisting of finite dimensional vector spaces over a commutative field $\F$, that is,
\[
C^*=(0\ra C^0\xrightarrow{\delta^0}C^{1} \xrightarrow{\delta^{1}}\cdots \xrightarrow{\delta^{m-1}} C^m\ra 0).
\]
Let $H^i=H^i(C^*)$ be the $i$-th cohomology group.
Choose a basis $\mb{c}^i$ of $C^i$ and a basis $\mb{h}^i$ of $H^i$.
The {\it Reidemeister torsion} $\Tor(C^*,\mb{c}^*,\mb{h}^*)$ is defined as follows.

Let $\widetilde{\mb{h}}^i\subset C^i$ be a representative {cocycle} of $\mb{h}^i$ in $C^i$. Let $\mb{b}^i$ be a tuple of vectors in $C^i$ such that $\delta^i(\mb{b}^i)$ is a basis of $B^{i+1}=\Im{\delta^i}$. Then the union of the sequences of the vectors $\delta^{i-1}(\mb{b}^{i-1})\widetilde{\mb{h}}^i \mb{b}^i$ gives a basis of $C^i$.
We write $[\delta^{i-1}(\mb{b}^{i-1})\widetilde{\mb{h}}^i \mb{b}^i/\mb{c}^i] \in \F^\times =\F\setminus \{0\}$ for the determinant of the transition matrix that takes $\mb{c}^i$ to $\delta^{i-1}(\mb{b}^{i-1})\widetilde{\mb{h}}^i \mb{b}^i$.
Let $|C^*|$ be $\sum_{i=0}^{m}\alpha_i(C^*)\beta_i(C^*)$ where $\alpha_i(C^*)\coloneqq\sum_{j=0}^{i}\dim{C^j}$ and $\beta_i(C^*)\coloneqq\sum_{j=0}^i\dim{H^j}$.
Let $\mb{c}^*$ be $(\mb{c}^0,\dots,\mb{c}^{m})$ for $C^*$ and $\mb{h}^*$ be $(\mb{h}^0,\dots,\mb{h}^{m})$ for $H^*$.
Then, the torsion is defined to be the alternating product of the form
\[\Tor(C^*,\mb{c}^*,\mb{h}^*)\coloneqq(-1)^{|C^*|}\prod_{i=0}^{m} [\delta^{i-1}(\mb{b}^{i-1})\widetilde{\mb{h}}^i \mb{b}^i/\mb{c}^i]^{(-1)^{i+1}}\in \F^\times.\]
It is known that the torsion $\Tor(C^*,\mb{c}^*,\mb{h}^*)$ does not depend on the choices of $\widetilde{\mb{h}}^i$ and $\mb{b}^i$, but depends only on $\mb{c}^*$ and $\mb{h}^*$.
We refer to 
\cite{MilnorBAMS66,Turaev01} for the details.
Note that, if $C^*$ is acyclic (i.e.{,} $H^*(C^*)=0$), then the torsion $\Tor(C^*,\mb{c}^*,\mb{h}^*)$ is usually denoted {by} $\Tor(C^*,\mb{c}^*)$.

\begin{rem}
In \cite{MilnorBAMS66} and \cite{Turaev01}, the torsion was defined from a chain complex; however, for convenience of computation, we define the torsion from a cochain complex in this paper.
\end{rem}

\subsection{Adjoint Reidemeister torsion of a $3$-manifold}\label{subsecAdj}
Let $M$ be a connected oriented closed $3$-manifold, and let $G$ be a semisimple Lie group with Lie algebra $\mf{g}$.
Let $\varphi:\pi_1(M)\rightarrow G$ be a representation, that is, a group homomorphism.
Suppose that $G$ injects $\SL_n(\C)$ for some $n\in\N$.

First, we introduce the cochain complex.
Choose a finite cellular decomposition of $M$ and consider the universal covering space $\widetilde{M}$. We can canonically obtain a cellular structure of $\widetilde{M}$ as a lift of the decomposition of $M$, and define the cellular complex $(C_*(\widetilde{M};\Z),\partial_*)$.
We regard the covering transformation of $M$ as a left action of $\pi_1(M)$ on $\widetilde{M}$, and naturally regard $C_*(\widetilde{M};\Z)$ as a left $\Z[\pi_1(M)]$-module.
Since $\mf{g}$ is a left $\Z[\pi_1(M)]$-module via the composite of $\varphi $ and the adjoint action $G\rightarrow \Aut(\mf{g})$, we have the cochain complex of the form
\[(C^*_\varphi({M};\mf{g}),\delta^*)\coloneqq(\Hom_{\Z[\pi_1(M)]}(C_*(\widetilde{M};\Z),\mf{g}),\delta^*)\]
where $\delta^i$ is defined by $\delta^i(f)=f\circ \partial_{i{+1}}$.

Next, we define an ordered basis of $C^i_\varphi({M};\mf{g})$.
Let $\mb{c}_i=(c_{i,1},c_{i,2},\ldots,c_{i,\rank_\Z{C_i({M};\Z)}})$ be a basis of $C_i({M};\Z)$ derived from the $i$-cells.
Then, $\widetilde{\mb{c}}_i=(\widetilde{c}_{i,1},\widetilde{c}_{i,2},\ldots,\widetilde{c}_{i,\rank_\Z{C_i({M};\Z)}})$ is a basis of the free $\Z[\pi_1(M)]$-module $C_i(\widetilde{M};\Z)$. Here, $\widetilde{c}_{i,j}$ is a lift of $c_{i,j}$ to $\widetilde{M}$.
Since $\mf{g}$ is semisimple, the {Killing} form $B$
is non-degenerate, and we can fix an ordered basis $\mca{B}=(e_1,e_2,\ldots,e_{\dim{\mf{g}}})$ of $\mf{g}$ that is orthogonal with respect to $B$. 
Let $c_{i,j}^k\in C^i_\varphi({M};\mf{g})$ be a $\Z[\pi_1(M)]$-homomorphism defined by $c_{i,j}^k(\widetilde{c}_{i,\ell})=\delta_{j,\ell}e_k\in\mf{g}$ for any $i\in\{0,1,2,3\}$, $j,\ell\in\{1,2,\ldots,\rank_\Z{C_i({M};\Z)}\}$, and $k\in\{1,2,\ldots,\dim{\mf{g}}\}$.
Here, $\delta_{j,\ell}$ is the Kronecker delta.
Then the tuple
\[\mb{c}^i=(c_{i,1}^1,c_{i,1}^2,\ldots,c_{i,1}^{\dim{\mf{g}}},c_{i,2}^1,c_{i,2}^2,\ldots,c_{i,2}^{\dim{\mf{g}}},\ldots,c_{i,\rank_\Z{C_i(M;\Z)}}^1,c_{i,\rank_\Z{C_i(M;\Z)}}^2,\ldots,c_{i,\rank_\Z{C_i(M;\Z)}}^{\dim{\mf{g}}})\]
provides an ordered basis of $C^i_\varphi(M;\mf{g})$ as desired.

We next consider the cellular cochain complex $C^*(M;\R)$ with the real coefficient.
Let $c^i_j:C_i(M;\Z)\ra \R$ be a homomorphism defined by $c^i_j(c_{i,k})=\delta_{j,k}$ for any $i\in\{0,1,2,3\}$ and $j,k\in\{1,2,\ldots,\rank_\Z{C_i({M};\Z)}\}$.
Then, $\mb{c}^i_\R=(c^i_1,\ldots,c^i_{\rank_\Z{C_i(M;\Z)}})$ is a basis of $C^i(M;\R)$.
{By} Poincar\'e duality, we can naturally fix a {\it homology orientation} $\sigma_M$ of $H^*(M;\R)= \bigoplus_{i = 0}^3 H^i(M;\mathbb{R})$.
Let {$\mb{h}^*_\R
$} be a basis of $H^*(M;\R)$ such that the exterior product of $\mb{h}^*_\R$ coincides with $\sigma_M$.
The Reidemeister torsion of $C^*(M;\R)$ associated with $\mb{c}^*_\R$ and $\mb{h}^*_\R$ lies in $\R^\times$. Therefore, we can define the sign
\[\tau_M\coloneqq\sgn (\Tor(C^*(M;\R),\mb{c}^*_\R,\mb{h}^*_\R))\in \{\pm1\}.\]
Then, the {\it adjoint Reidemeister torsion} of $M$ associated with $\varphi$ is defined to be
\[\tau_{\varphi}(M)\coloneqq(\tau_M)^{\dim \mf{g}} \cdot \Tor(C^*_\varphi({M};\mf{g}),\mb{c}^*) \in \C^\times,\]
if $C^*_\varphi({M};\mf{g})$ is acyclic. If $C^*_\varphi({M};\mf{g})$ is not acyclic, then we define $\tau_{\varphi}(M)=1$.
As is known \cite{Dubois09,PortiMAMS97}, the definition of $\tau_\varphi(M)$ does not depend on the choices of the orthogonal basis $\mca{B}$, finite cellular decompositions of $M$, $\widetilde{\mb{c}_i}$, and $\mb{h}^i_\R$, but depends only on $M$ and the conjugacy class of $\varphi$.

Finally, we give a sufficient condition for the acyclicity, which might be known:
\begin{lem}\label{lem4}
As in Conjecture \ref{conj1}, assume that $R^{\rm irr}_G(M)$ is of finite order.
Then, for any irreducible representation $ \varphi: \pi_1(M) \ra G$,
the associated cohomology {$H^{*}_\varphi( M;\mathfrak{g})$} is acyclic.
\end{lem}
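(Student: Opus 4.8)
The plan is to show that every cohomology group $H^{i}_\varphi(M;\mf{g})$, $i=0,1,2,3$, vanishes; by definition of the torsion this is exactly what "$C^{*}_\varphi(M;\mf{g})$ is acyclic" means. Two of the four vanishings are formal. First, $H^{0}_\varphi(M;\mf{g})$ is the subspace of $\mf{g}$ fixed by $\Ad\circ\varphi$, i.e. the Lie algebra of the centraliser $Z_{G}(\varphi(\pi_{1}(M)))$ of the image; since $\varphi$ is irreducible and $G$ is semisimple this centraliser is finite (otherwise the image would be contained in a proper parabolic subgroup, contradicting irreducibility), so $H^{0}_\varphi(M;\mf{g})=0$. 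Second, the complex $C^{*}_\varphi(M;\mf{g})$ computes the local-coefficient cohomology $H^{*}(M;\mf{g}_\varphi)$, and since $M$ is closed and oriented and the Killing form $B$ is a non-degenerate $\Ad$-invariant symmetric form on $\mf{g}$, it identifies the local system $\mf{g}_\varphi$ with its dual; Poincar\'e duality then gives $H^{i}_\varphi(M;\mf{g})\cong H^{3-i}_\varphi(M;\mf{g})^{*}$ for all $i$. Hence $H^{3}_\varphi(M;\mf{g})\cong H^{0}_\varphi(M;\mf{g})^{*}=0$ and $H^{2}_\varphi(M;\mf{g})\cong H^{1}_\varphi(M;\mf{g})^{*}$, so the whole lemma reduces to the single vanishing $H^{1}_\varphi(M;\mf{g})=0$.

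For this I would use the deformation-theoretic description of $H^{1}$. The space $Z^{1}(\pi_{1}(M);\mf{g})$ of twisted $1$-cocycles is the Zariski tangent space of the representation scheme $\Hom(\pi_{1}(M),G)$ at $\varphi$, while the coboundary space $B^{1}(\pi_{1}(M);\mf{g})$ is the tangent space to the conjugation orbit of $\varphi$, of dimension $\dim\mf{g}-\dim H^{0}_\varphi(M;\mf{g})=\dim\mf{g}$ by the first paragraph. Because $\varphi$ is irreducible its orbit is closed and its stabiliser is finite, so near $[\varphi]$ the GIT quotient coincides with the orbit space; consequently $H^{1}_\varphi(M;\mf{g})=Z^{1}/B^{1}$ is precisely the Zariski tangent space of the character variety $R^{\rm irr}_G(M)$ at the point $[\varphi]$.

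It then remains to deduce that this tangent space is zero from the hypothesis that $R^{\rm irr}_G(M)$ is a finite set, and this is the step I expect to be the real obstacle. Finiteness of the underlying set only forces $R^{\rm irr}_G(M)$ to be $0$-dimensional, and a priori the character scheme could be a non-reduced fat point over $[\varphi]$, in which case $H^{1}_\varphi(M;\mf{g})\neq 0$ even though $[\varphi]$ is isolated; so one must rule out non-reducedness at $[\varphi]$, equivalently show that $\varphi$ has no obstructed infinitesimal deformations transverse to its orbit. I would handle this by invoking reducedness of $R^{\rm irr}_G(M)$ — which is the natural reading of "the character variety" and which is checked directly for the manifolds $M=S^{3}_{p/1}(4_{1})$ and $M=S^{3}_{1/q}(4_{1})$ of Theorem \ref{thm1}, where $R^{\rm irr}_G(M)$ is identified explicitly (the explicit computation of a nonzero adjoint torsion itself confirms acyclicity there) — after which vanishing of the Zariski tangent space of a $0$-dimensional reduced variety is immediate and gives $H^{1}_\varphi(M;\mf{g})=0$, hence also $H^{2}_\varphi(M;\mf{g})=0$, completing the proof.
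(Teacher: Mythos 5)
Your reduction has exactly the same skeleton as the paper's proof: $H^0_\varphi(M;\mathfrak{g})=0$ by irreducibility, Poincar\'e duality disposes of $H^3$ and identifies $H^2$ with the dual of $H^1$, and everything is reduced to the single vanishing $H^1_\varphi(M;\mathfrak{g})=0$, which both you and the paper obtain from the deformation-theoretic identification of $H^1$ with the (co)tangent space of $R^{\rm irr}_G(M)$ at $[\varphi]$ together with the finiteness hypothesis; the paper quotes this identification as classical (citing \cite{WittenCMP91}) and concludes immediately.

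The problem is the step you yourself single out as the crux. You rightly note that finiteness of the set $R^{\rm irr}_G(M)$ only gives $0$-dimensionality, and that a non-reduced isolated point could still carry $H^1\neq 0$; but you then close this by ``invoking reducedness'' with no argument, and only for the particular manifolds of Theorem \ref{thm1}, whereas the lemma is stated for an arbitrary closed $M$ with $R^{\rm irr}_G(M)$ finite. Moreover your parenthetical justification is circular: in this paper the adjoint torsion is \emph{defined} to equal $1$ whenever $C^*_\varphi(M;\mathfrak{g})$ fails to be acyclic, and the explicit values in Theorem \ref{thm2} are computed under the assumption of acyclicity via the $\tau$-chain method, so every torsion is a nonzero complex number by definition and a ``nonzero computed torsion'' cannot certify acyclicity. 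As written, then, your argument establishes the lemma only conditionally on a reducedness statement you never prove. The paper sidesteps the scheme-theoretic issue entirely by taking the identification of $H^1_\varphi(M;\mathfrak{g})$ with the cotangent space of the variety $R^{\rm irr}_G(M)$ as known, after which $0$-dimensionality gives the vanishing at once; if you prefer your more careful scheme-level phrasing, you must actually supply the missing input, e.g. deduce reducedness (for the manifolds at hand this could be extracted from the explicit parametrization in Proposition \ref{prop1} together with the simplicity of the relevant roots of $Q_M$ proved in Section \ref{seclem}) or prove $H^1_\varphi(M;\mathfrak{g})=0$ directly, rather than appealing to the torsion computation.
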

\begin{proof} Since it is classically known {\cite{Weil64}} that
the first cohomology $H^{1}_\varphi( M;\mathfrak{g})$ is identified with the cotangent space of the variety $R^{\rm irr}_G(M)$, it vanishes by assumption; by Poincar\'{e} duality, the second one does.
Meanwhile, by definition, the zeroth cohomology $H^{0}_\varphi( M;\mathfrak{g})$ equals the invariant part $\{ a \in \mathfrak{g} \mid a \cdot \varphi (g) = a \mathrm{ \ for \ any \ } g \in \pi_1(M) \}$, which is zero by the irreducibility.
Hence, the third one also vanishes by Poincar\'{e} duality again.
\end{proof}

\subsection{Presentations of the cellular complexes of $M$}\label{subsecPres}

From now on, we assume that $G=\SL_2(\C)$ and $M$ is one of
$S^3_{p/1}(4_1)$ and $S^3_{1/q}(4_1)$ for some integers $p$ and $q\neq0$ as in Theorem \ref{thm1}.
According to \cite{NosakaJMSCT22}, {group presentations of $\pi_1(M)$ are given as follows}:
\begin{equation}\label{Eq:pres}
\begin{split}
\pi_1(S^3_{p/1}(4_1))&\cong\langle x_1,x_2,\mf{m}\, |\,
\mf{m}x_1x_2\mf{m}^{-1}x_1^{-1},
\mf{m}x_2x_1x_2\mf{m}^{-1}x_2^{-1},
[x_1,x_2]\mf{m}^p
\rangle, \\
\pi_1(S^3_{1/q}(4_1))&\cong\langle x_1,x_2,\mf{m},\mf{m}'\, |\,
\mf{m}x_1x_2\mf{m}^{-1}x_1^{-1},
\mf{m}x_2x_1x_2\mf{m}^{-1}x_2^{-1},
\mf{m}[x_1,x_2]^q,
\mf{m}'[x_1,x_2]^{-1}
\rangle
.
\end{split}
\end{equation}
Here, $[x,y]$ is $xyx^{-1}y^{-1}$.
Let $g$ be the number of generators of the group presentation above.
Replace $\mf{m}$ by $x_3$, $\mf{m}'$ by $x_4$, and let $r_i$ denote the $i$-th relator in \eqref{Eq:pres}.
Under the identifications
$C^i_\varphi({M};\mf{g})=\Hom_{\Z[\pi_1(M)]}(C_i(\widetilde{M};\Z), \mf{g}) = \Hom_{\mf{g}}(\mf{g}^{\rank _\Z C_i(M;\Z)},\mf{g})$,
the cochain complex $(C^*_\varphi({M};\mf{g}),\delta^*)$ is {isomorphic to} the dual of the following chain complex:
\begin{equation}\label{Eq:chain}
0\ra \mf{g}\xrightarrow{\delta^3}\mf{g}^{g} \xrightarrow{\delta^2} \mf{g}^{g}\xrightarrow{\delta^1} \mf{g}\ra 0.
\end{equation}

We now describe the differentials $\delta^*$ in detail.
Let $F$ and $P$ be the free groups $\langle x_1, \dots, x_g\ | \ \rangle$ and $ \langle \rho_1,\ldots,\rho_g \ | \ \rangle$, respectively.
We define the homomorphism $\psi: P *F \ra F$ by setting $\psi (\rho_j)=r_j$ and $\psi(x_i)=x_i.$
Let $\mu$ denote the natural surjection from $F$ to $\pi_1(M)$.
According to \cite[\S3.1]{NosakaJMSCT22}, we can describe $\delta^*$ by the words of the presentations \eqref{Eq:pres} as follows: let $W\in P*F$ be
{\[\rho_1\cdot x_1\rho_2 x_1^{-1}\cdot (x_1x_2x_1^{-1})\rho_1^{-1}(x_1x_2x_1^{-1})^{-1}\cdot ([x_1,x_2])\rho_2^{-1}([x_1,x_2])^{-1}\cdot \rho_3 \cdot \mf{m}\rho_3^{-1}\mf{m}^{-1},\]}
if $M=S^3_{p/1}(4_1)$. Let $W \in P*F$ be
\[\rho_1\cdot x_1\rho_2 x_1^{-1}\cdot (x_1x_2x_1^{-1})\rho_1^{-1}(x_1x_2x_1^{-1})^{-1}\cdot ([x_1,x_2])\rho_2^{-1}([x_1,x_2])^{-1}\cdot \rho_4^{-1}\cdot \mf{m}'\rho_3\mf{m}'^{-1}\cdot \rho_4 \cdot \rho_3^{-1},\]
if $M=S^3_{1/q}(4_1)$.
Then, each $\delta^*$ can be written as the matrices
\begin{equation}\label{Eq:delta}
\delta^1=\left(1-x_j\right)_{j=1,\ldots,g},\,\,\,
\delta^2=\left({\small \frac{\partial r_j}{\partial x_i}}\right)_{i,j=1,\ldots,g},\,\,\,
\delta^3=\mu \circ \psi\left({\small \frac{\partial W}{\partial \rho_i}}\right)_{i=1,\ldots,g},
\end{equation}
where $\frac{\partial *}{\partial *}$ is {\it Fox derivative}, see \cite[\S 16]{Turaev01} for the definition.
Although each entry of the matrices is described in $\Z[\pi_1(M)]$, we regard the entry as an automorphism of $\mf{g}$ via the adjoint action.

\section{Proof of Theorem \ref{thm1} }\label{secProof}
The purpose of this section is to show the proof of Theorem \ref{thm1}.
First, Section \ref{subsecPre} determines the torsion with respect to every irreducible representation. Next, Section \ref{seclem} establishes two key lemmas, and Section \ref{subsecProof} completes the proof.
Throughout this section, $E_2$ means the $(2\times 2)$-identity matrix, and we let $G$ be $\SL_2(\C)$.

\subsection{Preliminary}\label{subsecPre}
\begin{wrapfigure}[9]{r}[10mm]{40mm}\vspace{-10pt}
\begin{picture}(100,100)
\put(0,-2){\includegraphics[scale=0.25]{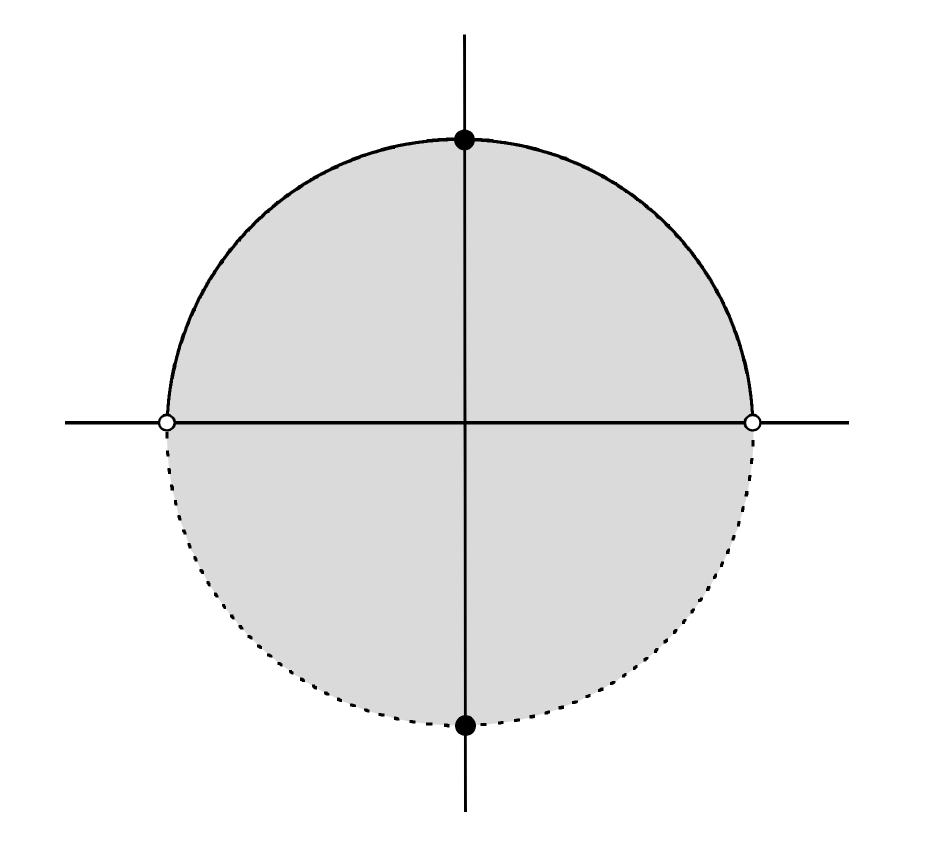}}
\put(10,90){\fbox{$\C$}}
\put(100,38){Re}
\put(92,40){{\tiny $1$}}
\put(10,40){{\tiny $-1$}}
\put(38,90){Im}
\put(58,85){{\tiny $\sqrt{-1}$}}
\put(58,7){{\tiny $-\sqrt{-1}$}}
\put(60,35){$D$}

\end{picture}
\vspace{-15pt}\caption{$D\subset \C$}\label{fig1}
\end{wrapfigure}

To state {Propositions \ref{prop1-1}, \ref{prop1-2} and Theorems \ref{thm2-1}, \ref{thm2-2}}  below, let us consider a domain $D$ in $\C$ of the form
\[D\coloneqq \{a\in\C\,\,|\,\,|a|<1\} \cup \{a\in \C\,\,\,|\,\,\,\Im(a)>0,|a|=1\}\cup\{-\sqrt{-1}\},\]
as in Figure \ref{fig1}, and
define the {Laurent} polynomial $Q_M(x)\in \Z[x{,x^{-1}}]$ by setting
\[Q_{M}(x)\coloneqq\left\{
\begin{array}{ll}
1-x^{p-4}+x^{p-2}+2x^p+x^{p+2}-x^{p+4}+x^{2p} ,& \text{if }M=S^3_{p/1}(4_1),\\
1-x^{2q}-x^{4q-1}-2x^{4q}-x^{4q+1}-x^{6q}+x^{8q} ,& \text{if }M=S^3_{1/q}(4_1)
.\end{array}
\right.
\]
Let $Q_M^{-1}(0){\in \C}$ denote the zero set of the {Laurent} polynomial $Q_M$.

\begin{prop}\label{prop1-1}
Let $M=S^3_{p/1}(4_1)$ for some integer $p$.
If $p\neq 0$, then there is a bijection $\Phi_{M}:R^{\rm irr}_G(M)\ra Q_M^{-1}(0)\cap D$.
Here, for $[\varphi]\in R^{\rm irr}_G(M)$, we define
\begin{equation}\label{Eq:Phi}
\Phi_M([\varphi])\coloneqq ({\mathrm {The\,\, eigenvalue \,\,of\,\, }}\varphi(\mf{m}) {\mathrm{ \,\,that\,\, lies\,\, in\,\, } }D\setminus\{\pm \sqrt{-1}\}),\end{equation}
when the eigenvalues of $\varphi(\mf{m})$ are not $\pm \sqrt{-1}$.
If $\varepsilon \sqrt{-1}\in Q_M^{-1}(0)$ for some $\varepsilon \in \{\pm1\}$, $\Phi_M^{-1}(\varepsilon \sqrt{-1})$ {is a conjugacy class with a representation $\varphi$ defined by}
\begin{equation}\label{Eq:var}\varphi(\mf{m})=\left(\begin{array}{cc}
\varepsilon \sqrt{-1} & 0 \\
0 & -\varepsilon \sqrt{-1}\\
\end{array}
\right),\,\,\,\,
\varphi(x_1)=\left(\begin{array}{cc}
\frac{1}{4} (-1+\varepsilon\sqrt{5}) & 1 \\
\frac{1}{8} (-5-\varepsilon\sqrt{5}) & \frac{1}{4} (-1+\varepsilon\sqrt{5}) \\
\end{array}
\right).
\end{equation}
If $p=0$, then there is a bijection
$\Phi_{M}:R^{\rm irr}_{G}(M)\ra \{\pm \sqrt{-1} ,\pm (1-\sqrt{5})/2\}$.

\end{prop}

\begin{prop}\label{prop1-2}
Let $M=S^3_{1/q}(4_1)$ for some integer $q\neq 0$.
Then, there is a bijection $\Phi_{M}:R^{\rm irr}_G(M)\ra Q_M^{-1}(0)\cap D$.
{Here, $\Phi_M$ is defined by \eqref{Eq:Phi} as in Proposition \ref{prop1-1}. Note that, since $\pm\sqrt{-1}\notin Q_M^{-1}(0)$ for $M=S^3_{1/q}(4_1)$, \eqref{Eq:var} can be excluded from the definition in this case.}
\end{prop}

\begin{proof}[{Proof of Proposition \ref{prop1-1}}]
Let $p\neq 0$.
For an irreducible representation $\varphi:\pi_1(M)\ra \SL_2(\C)$, take $x,y,z,w\in\C$ so that $\varphi(x_1)=\left( {\small \begin{array}{cc}x&y\\z&w\\ \end{array}}\right)$ and $xw-yz=1$. 
We first claim that $\varphi(\mf{m})$ is diagonalizable.
In fact, if not so, we may suppose
$\varphi(\mf{m})=\left({\small \begin{array}{cc}\eta&b\\0&\eta\\ \end{array}} \right)$ for some $b\in \C^\times$ and $\eta\in \{\pm1\}$.
Since $\varphi(r_1)=E_2$, we have
\begin{equation}\label{Eq:x2}
\varphi(x_2)=\varphi(x_1){^{-1}}\varphi(\mf{m})^{-1}\varphi(x_1)\varphi(\mf{m})=
\left({\small
\begin{array}{cc}
1-\eta b w z & - b \left(b w z+\eta w^2-\eta\right) \\
\eta b z^2 & b^2 z^2+\eta b w z+1 \\
\end{array}}
\right).
\end{equation}
It follows from
\eqref{Eq:x2}
that
\begin{equation}\label{Eq:r3}\begin{split}
\varphi(r_3)&=\varphi(x_1)\varphi(x_2)\varphi(x_1)^{-1}\varphi(x_2)^{-1}\varphi(\mf{m})^p\\
&=\eta^p{\small
\left(
\begin{array}{cc}
b^4 z^4+\eta b^3 w z^3-b^2 z^2 {\left( x^2+xw-3\right)} 
+\eta b z (w-x)+1 & * \\
-\eta b^3 z^4-b^2 z^3 (w+x)-\eta 2 b z^2 & {*}
 \\
\end{array}
\right).}
\end{split}\end{equation}
Then, the condition $\varphi(r_3)=E_2$ and $b\neq 0$ leads to $z=0$.
{In fact, if $z\neq 0$, the (2,1)-entries of $\varphi(r_3)=E_2$ yields $x=-\eta 2 b^{-1}z^{-1}-w-\eta b z$ by \eqref{Eq:r3}. Thus, the (1,1)-entry of \eqref{Eq:r3} equals $-1$, resulting in a contradiction.}

By substituting $z=0$ into $\varphi(r_2)$, we obtain
\[\begin{split}
E_2=\varphi(r_2)&=\varphi(\mf{m})\varphi(x_2)\varphi(x_1)\varphi(x_2)\varphi(\mf{m})^{-1}\varphi(x_2)^{-1}
=\left({\small
\begin{array}{cc}
x & y-\eta b \left(w^3-2 w+x\right) \\
0 & w \\
\end{array}}
\right).
\end{split}
\]
Thus, $x=w=1$ and $y=0$; therefore, $\varphi(x_1)$ and $\varphi(x_2)$ are upper triangular matrices, which leads to a contradiction to the irreducibility.

By the above claim, we may suppose $\varphi(\mf{m})=\left({\small \begin{array}{cc}a & 0 \\ 0&a^{-1}\\ \end{array}}\right)$ for some $a\in D{\setminus \{0\}}$.
Since we consider $\varphi$ up to conjugacy, we may suppose $y=1$. 
Thus, $z=xw-1$.
Since $\varphi(r_1)=\varphi(r_2)=\varphi(r_3)=E_2$,
with the help of a computer program of Mathematica, we have
\begin{equation}\label{Eq:xzw}
\begin{split}
x&={\small \frac{1+a^2-a^4+\eta(1-2 a^2-a^4-2 a^6+a^8)^{1/2}}{2(1- a^2)}},\\
z&={\small -\frac{1-3 a^2+a^4+\eta(1-2 a^2-a^4-2 a^6+a^8)^{1/2}}{2 (a^2-1)^2 }},\\
w&={\small \frac{-1+a^2+a^4+\eta(1-2 a^2-a^4-2 a^6+a^8)^{1/2}}{2 a^2 (a^2-1)}},\\
\end{split}
\end{equation}
and $Q_M(a)=0$ when $a\neq \pm \sqrt{-1}$.
Here, we fix a branch of the $1/2$-th power on $\C^\times\setminus \R$,
and define the signs $\eta\in \{\pm1\}$ by setting
\begin{equation*}
\eta=
\begin{cases}
+1, & {\mathrm{ if } }\,\, -1+a^2+2a^4+a^6-a^8+2 a^{p+4}=(a^4-1)(1-2 a^2-a^4-2 a^6+a^8)^{1/2},\\
-1, & {\mathrm{ if } } \,\,-1+a^2+2a^4+a^6-a^8+2 a^{p+4}=-(a^4-1)(1-2 a^2-a^4-2 a^6+a^8)^{1/2}.\\
\end{cases}
\end{equation*}
When $a=\varepsilon \sqrt{-1}$ for some $\varepsilon \in \{\pm1\}$, we have
{\begin{equation}\label{Eq:xzw2}
x= \frac{-1+\varepsilon \sqrt{5}}{4},\,\,\,\,\, z=\frac{-5-\varepsilon\sqrt{5}}{8},\,\,\,\,\, w= \frac{-1+\varepsilon\sqrt{5}}{4}\end{equation}}by the condition $\varphi(r_1)=\varphi(r_2)=\varphi(r_3)=E_2$.
In summary, the map $\Phi_M$ is well-defined and injective.
Finally, we can easily show the surjectivity of {$\Phi_M$} by following the reverse process of the above calculation.

{In the remaining case of $p=0$, define $\Phi_M$ as follows:
for each $\varepsilon \in\{\pm1\}$, let $\Phi_M^{-1}(\varepsilon\sqrt{-1})$ be a representation $\varphi$ defined by \eqref{Eq:var}. For $\varepsilon' \in\{\pm1\}$, let $\Phi_M^{-1}(\varepsilon' (1-\sqrt{5})/2)$   has a representation $\varphi$ defined by
\[\varphi(\mf{m})=
\left(\begin{array}{cc}
\varepsilon' \frac{1-\sqrt{5}}{2} & 0 \\
0 & \varepsilon' \frac{-1-\sqrt{5}}{2}\\
\end{array}
\right),\,\,\,\,
\varphi(x_1)=\left(\begin{array}{cc}
1 & 1 \\
0 & 1\\
\end{array}
\right),
\]
respectively. Then, by $\varphi(r_1)=\varphi(r_2)=\varphi(r_3)=E_2$, we can show the well-definedness and injectivity of $\Phi_M$ as in the case of $p\neq0$. By follwing the reverse process, we can check the surjectivity of $\Phi_M$ as well.}
\end{proof}

\begin{proof}[{Proof of Proposition \ref{prop1-2}}]
{It can be proved in the same manner as Proposition \ref{prop1-1}.
In this case, instead of \eqref{Eq:xzw}, we have 
\begin{equation}\label{Eq:xzwq}
\begin{split}
x=\frac{a^{-2 q} \left(2 a^{6 q}+2 a^{4 q+1}\right)}{2 \left(a^{2 q}-1\right)^2 \left(a^{2
   q}+1\right)}, \ z=-\frac{4 a^{2 q}+2 a^{4 q}-2 a^{6 q}+2 a^{4 q+1}-2}{2 y \left(a^{2 q}-1\right)^3
   \left(a^{2 q}+1\right)},\ w=-\frac{4 a^{4 q}+2 a^{6 q}-2 a^{8 q}+2 a^{4 q+1}-2}{2 \left(a^{2 q}-1\right)^2 \left(a^{2
   q}+1\right)}.
\end{split}
\end{equation}
}
\end{proof}

\begin{thm}\label{thm2-1}
{Let $M=S^3_{p/1}(4_1)$ for some integer $p\neq0$.} For $a\in Q_M^{-1}(0)\cap D$ as in Proposition \ref{prop1-1}, we denote the representative $\SL_2(\C)$-representation of $\Phi_M^{-1}(a)$ by $\varphi_a$.
Then, the adjoint Reidemeister torsion of $M$ with respect to $\varphi_a$ is computed as
\begin{empheq}[left={\hspace{-13pt}\tau_{\varphi_a}(M)=\empheqlbrace}]{align}
& -\frac{ 4-p+(-2+p)a^2+2pa^4+(2+p)a^6-(4+p)a^8+2pa^{4+p}}{2(a^2-1)^3(1+a^2)}, &\text{if $ \,a\notin\{\pm \sqrt{-1}\}$,} \label{Eq:tor1}\\
& \frac{1}{8} (10+ a p \sqrt{-5}), &\text{if $\,a\in\{\pm \sqrt{-1}\}$,} \label{Eq:tor2}
\end{empheq}
\end{thm}

\begin{thm}\label{thm2-2}
{Let $M=S^3_{1/q}(4_1)$ for some integer $q\neq0$.} For $a\in Q_M^{-1}(0)\cap D$ as in Proposition \ref{prop1-2}, we denote the representative $\SL_2(\C)$-representation of $\Phi_M^{-1}(a)$ by $\varphi_a$.
Then, the adjoint Reidemeister torsion of $M$ with respect to $\varphi_a$ is computed as
\begin{equation}\label{Eq:tor3}
\tau_{\varphi_a}(M)=
 -\frac{a^{6q}(-1+4q+(1-2q)a^{2q}+2(1+a)a^{4q}+(1+2q)a^{6q}-(1+4q)a^{8q})}{2(a^{4q}-1)^3(1-2a^{2q}-a^{4q}-2a^{6q}+a^{8q})}.
\end{equation}
\end{thm}

\begin{proof}[Proof of {Theorems \ref{thm2-1} and \ref{thm2-2}}]
Under the identification of $\mf{g}\cong \C^3$,
we can concretely describe each $\delta^i$ as the matrices according to \eqref{Eq:delta} and the description of $\Phi_M$ {in the proofs of Propositions \ref{prop1-1} and \ref{prop1-2}}.
Applying the $\tau$-chain method in \cite[\S2.1]{Turaev01} to the chain complex $C^*_\varphi(M;\mf{g})$, with the help of a computer program of Mathematica, we can directly obtain the resulting $\tau_{\varphi_a}(M)$.

\end{proof}

\begin{rem}\label{remTor}

\begin{enumerate}[(i)]
\item While this paper deals with the adjoint torsion via adjoint action, the classical {\it Reidemeister torsion} of $M=S^3_{p/q}(4_1)$ with respect to the $\SL_2(\C)$-representation was computed in \cite{kitano2015reidemeister}.
\item When $M=S^3_{p/1}(4_1)$, the torsion $\tau_{\varphi}(M)${, up to sign,}
was computed in \cite{OhtsukiCMP19}. The advantage of Theorem \ref{thm2-1} is that the sign of the torsion is recovered; thus, we can compute the sum of $\tau_\varphi(M)^{n}$'s, as is seen later.
\item \label{rem:a}We can easily check that $\tau_{\varphi_{a^{-1}}}(M)=\tau_{\varphi_a}(M)\in \C^\times$ by using the relation $Q_M(a)=0$ when $a\neq \pm \sqrt{-1}$, and that $Q_M(\pm\sqrt{-1})=0$ with $M=S^3_{p/1}(4_1)$ if and only if $p$ is divisible by $4$.
\item If $p=0$, that is{, if} $M=S^3_{0/1}(4_1)$, then we can similarly compute $\tau_{\varphi_a}(M)$ as $5/4$, $5/4$, $5$, and $5$ with respect to $a=\sqrt{-1}$, $-\sqrt{-1}$, $(1-\sqrt{5})/2$, and $-(1-\sqrt{5})/2$, respectively.
\end{enumerate}
\end{rem}

\subsection{Two key lemmas}\label{seclem}
As preliminaries of the proof of Theorem \ref{thm1}, we prepare two lemmas:
\begin{lem}\label{lem4} Define a polynomial $\kappa_p(x) \in \Z[x]$ by setting
\[ \kappa_p(x) =\left\{
\begin{array}{ll}
(1+x)^2 ,& \mathrm{if} \ \ p= 2m+1,
\\
(1+x^2)^2 ,& \mathrm{if} \ \ p= 4m, \\
1 ,& \mathrm{if} \ \ p= 4m+2, \\
\end{array}
\right.\]
for some $m \in \Z$. 
Then,
$Q_{M}(x)$ with $M=S^3_{p/1}(4_1) $ is divisible by $\kappa_p(x) $,
and the quotient $ Q_{M}(x)/\kappa_p(x)$ has no repeated roots.
On the other hand,
$Q_{M}(x)$ with $M=S^3_{1/q}(4_1) $ is divisible by $(1+x)^2 $,
and the quotient $ Q_{M}(x)/(1+x)^2 $ also has no repeated roots.
\end{lem}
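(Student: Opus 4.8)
The plan is to verify divisibility and the "no repeated roots" claim separately, treating the two families $S^3_{p/1}(4_1)$ and $S^3_{1/q}(4_1)$ and, for the first, the three residue classes of $p$ modulo $4$. The divisibility assertions are purely algebraic identities in $\Z[x]$: for $M = S^3_{p/1}(4_1)$ one writes $Q_M(x) = 1 - x^{p-4} + x^{p-2} + 2x^p + x^{p+2} - x^{p+4} + x^{2p}$ and checks, case by case, that $Q_M$ vanishes to order $\geq 2$ at $x = -1$ (for $p$ odd) or at $x = \pm\sqrt{-1}$ (for $p \equiv 0 \bmod 4$). Concretely, I would evaluate $Q_M(\pm 1)$, $Q_M(\pm\sqrt{-1})$ and their first derivatives $Q_M'(\pm 1)$, $Q_M'(\pm\sqrt{-1})$, substituting $p = 2m+1$, $p = 4m$, $p = 4m+2$ respectively; since $x^{p}$ becomes $\pm 1$ or $\pm\sqrt{-1}$ depending only on $p \bmod 4$, each of these is an elementary finite computation. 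Similarly, for $M = S^3_{1/q}(4_1)$ one checks that $Q_M(-1) = Q_M'(-1) = 0$ identically in $q$, which forces $(1+x)^2 \mid Q_M(x)$. (Note $Q_M$ for $S^3_{1/q}(4_1)$ is, up to the substitution $x \mapsto$ a suitable power and a reflection, essentially the $p = 4q$ instance of the first polynomial, so the two computations are linked; I would exploit this to avoid redoing work.)

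For the "no repeated roots" part, the key tool is the resultant/discriminant criterion: a polynomial $f \in \C[x]$ with $f(0) \neq 0$ has a repeated root if and only if $\gcd(f, f') \neq 1$, equivalently the resultant $\operatorname{Res}(f, f')$ vanishes. So I would set $f(x) = Q_M(x)/\kappa_p(x)$ and argue that $f$ and $f'$ have no common root. Here the obstacle is that $p$ (or $q$) is a free parameter, so one cannot simply compute a single discriminant — the degree of $f$ grows with $p$. The approach I would take is: suppose $\alpha$ is a common root of $f$ and $f'$; using the explicit form of $Q_M$ one can write two polynomial relations in $\alpha$ in which the "bulk" terms $\alpha^{p}, \alpha^{2p}$ appear, then eliminate $\alpha^{p}$ between the relation $Q_M(\alpha) = 0$ and $Q_M'(\alpha) = 0$ (after dividing out $\kappa_p$). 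This elimination should collapse the $p$-dependence: because $Q_M$ is (essentially) a quadratic in the variable $u = \alpha^{p}$ with coefficients that are Laurent polynomials in $\alpha$ of bounded degree, the condition that $u = \alpha^{p}$ be a double root of that quadratic — or that the quadratic and its $\alpha$-derivative share the root — reduces to a fixed polynomial equation in $\alpha$ alone, independent of $p$, which one checks has no roots compatible with $Q_M(\alpha) = 0$ except the ones already removed by $\kappa_p$.

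Carrying this out, the main steps in order are: (1) dispose of the divisibility claims by the explicit evaluations above; (2) rewrite $Q_M$ as a quadratic $A(x) u^2 + B(x) u + C(x)$ in $u = x^{p}$ (for the $S^3_{p/1}$ family; for $S^3_{1/q}$ use $u = x^{2q}$), identifying $A, B, C \in \Z[x]$ of low degree — here $A(x) = x^{-4}(\text{something})$, $C(x) = 1 - x^{-4} + \dots$, so one should clear denominators and work with $x^4 Q_M$; (3) translate "$\alpha$ is a repeated root of $f = Q_M/\kappa_p$" into the vanishing of $Q_M(\alpha)$ together with $Q_M'(\alpha) - (\text{correction from }\kappa_p') = 0$, and eliminate $u$; (4) show the resulting $p$-free polynomial in $\alpha$ has, as its only roots, values of $\alpha$ at which the corresponding $\varphi_\alpha$ fails to exist or lie outside $D$, or roots that coincide with those of $\kappa_p$ — hence no genuine repeated root of $f$ survives. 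The hard part is unquestionably step (3)–(4): making the elimination genuinely uniform in $p$ and ruling out sporadic coincidences, since a priori for special values of $p$ extra factorizations of $Q_M$ could occur. I would handle potential sporadic cases by noting that the eliminant is a fixed polynomial whose (finitely many) roots can be checked one at a time against the constraint $Q_M(\alpha) = 0$, observing that $\alpha^p$ is then forced to take a specific value, which over-determines $p$ unless $\alpha \in \{\pm 1, \pm\sqrt{-1}\}$ — exactly the roots absorbed into $\kappa_p$.
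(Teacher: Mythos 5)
Your treatment of the divisibility half is fine and is essentially the paper's: evaluate $Q_M$ and its derivatives at $\pm 1,\pm\sqrt{-1}$, using that these values depend only on $p \bmod 4$. One small omission: to know that the roots of $\kappa_p$ do not reappear as repeated roots of the quotient $Q_M/\kappa_p$ you also need an upper bound on the multiplicity at these points (the paper checks the exact multiplicity by computing one more derivative); as written you only verify vanishing to order $\geq 2$ and then declare these points ``absorbed into $\kappa_p$''.

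The genuine gap is in your steps (3)--(4). Eliminating $u=\alpha^p$ between $Q_M(\alpha)=0$ (a quadratic in $u$) and $Q_M'(\alpha)=0$ does \emph{not} collapse the $p$-dependence: differentiating $x^{p+k}$ and $x^{2p}$ produces the factors $p+k$ and $2p$, so the second relation is linear in $u$ with $p$-dependent coefficients, and the eliminant is a degree-$8$ polynomial in $\alpha$ whose coefficients involve $p^2$ --- in the paper it is $(1+a)^2(1+a^2)^2\bigl(p^2-16+(16-2p^2)a^2-(36+p^2)a^4+(16-2p^2)a^6+(p^2-16)a^8\bigr)$. Hence there is no single, $p$-independent finite list of candidate repeated roots to ``check one at a time'', and your closing heuristic that the forced value of $\alpha^p$ ``over-determines $p$'' is not an argument for a fixed $p$: for each $p$ you get a fresh candidate set and exactly one compatibility condition, so nothing is over-determined a priori. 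The paper has to do real work here: it solves the $p$-dependent quartic in $a^2$ in radicals, writes $a^{p+4}$ in the basis $1,a,\dots,a^7$, and derives a contradiction with the derivative equation; some argument of this kind is missing from your plan. The second family is a further problem: for $M=S^3_{1/q}(4_1)$ the polynomial is a \emph{quartic}, not a quadratic, in $u=x^{2q}$ (with coefficients involving $x^{\pm1}$), and it is not obtained from the $p/1$ polynomial by a monomial substitution and reflection, so the proposed ``essentially the $p=4q$ instance'' shortcut is unjustified; the paper instead uses the reciprocity $a\mapsto 1/a$ to get two derivative relations, reduces to a quartic in $b=a^{2q}$ with $q^2$-coefficients, and concludes by a field-extension argument that uses $|q|\geq 5$.
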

\begin{proof}
The required statement with $|p| \leq 4$ and $|q| \leq 4$ can be directly shown, we may assume $|p| \geq 5 $
and $|q| \geq 5.$
We first focus on the case $M=S^3_{p/1}(4_1) $.
By a computation of $ \frac{\rm d^n \ }{\rm dx^n} (Q_M(x))\mid_{x=b}$ with $b= \pm 1 , \pm \sqrt{-1}$, we
can easily verify the multiplicity of $Q_M(x)$.
To elaborate, if $p=2m+1$, then 
\[Q_M(1)=4,\ \ Q_M(\sqrt{-1})=-2(-1)^m\sqrt{-1},\ \ Q_M(-\sqrt{-1})=2(-1)^m\sqrt{-1}\]
are all nonzero, which implies that $1,\pm\sqrt{-1}$ are not roots of $Q_M(x)$.
Furthermore,
\[Q_M(-1)=Q_M'(-1)=0,\ \ \  Q_M^{(2)}(-1)=-2(-12-p^2)\neq0,\]
indicates that $-1$ is a root of $Q_M(x)$ with multiplicity $2$.
When $p=4m$ or $4m+2$, we can analogously determine the multiplicity of $Q_M(x)$ with $b=\pm1, \pm\sqrt{-1}$.
Thus, $Q_M(x)$ is divisible by $\kappa_p(x)$, and
$Q_M(x)/\kappa_p(x)$ is not divisible by $x \pm 1$ and $x^2+1$.

Next, {suppose $Q_{M}(x) $ has a repeated root $a\in \C$ with $a \neq \pm 1 , \pm \sqrt{-1}$}.
Then, $Q_{M}(a) =0$ and $ Q_{M}'(a) =0,$ which are equivalent to 
\begin{equation}\label{opop1} 1-a^p(a^{-4}+a^{-2}+2+a^{2}-a^{4})+(a^p)^{2} =0,\end{equation}
\begin{equation}\label{opop2}
(p-4)a^{-4}+(p-2)a^{-2}+2p+(p+2)a^{2}-(p+4)a^{4}= - 2pa^{p}. \end{equation}
Applying \eqref{opop2} to \eqref{opop1} to kill the term $a^p$, we equivalently have
\[ (1+a)^2(1+a^2)^2 \bigl(p^2 - 16 +(16-2p^2) a^2-(36 +p^2)a^4 +(16-2p^2)a^6 +(p^2 -16)a^8 \bigr) =0.\]
Since $a^2 \neq \pm 1$, {the last quartic term} equation can be solved as
\[a^2 = \frac{ p^2-8 + 2 \eta p\sqrt{p^2 -15} + \varepsilon \sqrt{(40-3p^2 +2\eta p \sqrt{p^2 -15}) (p^2-24 +2\eta p \sqrt{p^2 -15})}}{2p^2 -32},\]
for some $\varepsilon, \eta \in \{ \pm 1\}.$
Let $F$ be the field extension $\Q(a)$ of degree 8. 
Let us regard \eqref{opop1} as a quadratic equation in $F$ of $a^p$.
Since the discriminant is not zero and $|p|>4$, $a^p $ does not lie in $F$. This is a contradiction. 
In summary,
$Q_M(x)/\kappa_p(x)$ has no repeated roots as required.

On the other hand, if
$M=S^3_{1/q}(4_1) $, we can easily show that $Q_M(x)$ is divisible not by $(1+x)^3$ but by $(1+x)^2$.
Similarly, {suppose  $Q_{M}(x) $  has a repeated root $a\in \C$ with $a \neq \pm 1 $.}
Then, $Q_{M}(a) = Q_{M}'(a) =0$.
We can easily see $Q_{M}(1/a) = Q_{M}'(1/ a) =0$ by reciprocity of $Q_M$.
Thus, we obtain $ ( x^{-4q} Q_{M})' (a) = ( x^{-4q} Q_{M})' (1/a) =0,$ which are equivalent to
\begin{equation}\label{opop3}2(1+a)= (2q+1)a^{2q}+ (-2q+1)a^{-2q}-(4q+1)a^{4q} - (-4q+1)a^{-4q}, \end{equation}
\begin{equation}\label{opop4} 2(1+a^{-1})= (2q+1)a^{-2q}+ (-2q+1)a^{2q}-(4q+1)a^{-4q} - (-4q+1)a^{4q}.\end{equation}
Since $a^{-4q}Q_{M}(a) = 0$ is equivalent to
\begin{equation}\label{opop5} 2(1+a) 2(1+a^{-1})= 4( a^{4q}-a^{2q}-a^{-2q} + a^{-4q}),\end{equation}
the substitution of \eqref{opop3} and \eqref{opop4} into \eqref{opop5} gives the equation
\begin{equation}\label{opop6} (1-b)^2 (1+b)^2 (-1 +2b+b^2+2b^3-b^4+16q^2 -16bq^2+36b^2q^2 -16b^3q^2+16b^4 q^2)=0 ,\end{equation}
where we replace $a^{2q}$ by $b$. If $\omega^{2q}= \pm 1$ and $\omega \in \C$, we can easily check $ Q_M( \omega ) \neq 0$ by definition.
Thus, $a^q$ is a solution of the quartic equation in \eqref{opop6} and does not lie in $\Q$, for any $q \in \Z$.
Let $F /\Q$ be the field extension by the quartic equation. By definition, $F$ does not contain $ a $ and $2+a+a^{-1}$,
which contradicts \eqref{opop5} since $|q| >4$. In summary,
$Q_M(x)/(1+x)^2$ has no repeated roots as required.
\end{proof}
Next, we should mention a slight modification of Jacobi's residue theorem:
\begin{lem}\label{lem5}
Fix $ {\zeta} \in {\{ 0, 2\}}$ and ${\theta} \in \{ 1,2\}$.
{Suppose a polynomial $k(x) \in \Q[x]$ has no repeated roots and $k(0)\neq 0$.}
Take another polynomial $g(x)\in \Q[x] $ such that $\mathrm{deg}(g) \leq \mathrm{deg}(k) - {\theta} {\zeta} -2$.
Then, the following sum is zero:
\begin{equation}\label{opop}
\sum_{a \in k^{-1}(0)} \frac{ (1 +a^{{\theta}})^{ {\zeta} } g (a)}{ \frac{\rm d \ }{ {\rm d}x }((1+x^{{\theta}})^{\zeta} k(x))|_{x=a}} =0.\end{equation}
\end{lem}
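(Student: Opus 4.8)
The plan is to reduce the statement to the classical Jacobi residue theorem by a change of variables and a degree count. Let me set $K(x) \coloneqq (1+x^{\varepsilon})^{\eta} k(x)$, which is a polynomial of degree $\varepsilon\eta + \deg(k)$. The sum in \eqref{opop} runs over the roots of $k$, not the roots of $K$; so first I would argue that we may replace the summation set $k^{-1}(0)$ by $K^{-1}(0)$ without changing the value of the sum. Indeed, at a root $a$ of $1+x^{\varepsilon}$ (which is a root of $K$ but not of $k$), the numerator $(1+a^{\varepsilon})^{\eta} g(a)$ vanishes as long as $\eta \geq 1$, while the denominator $\frac{d}{dx}\big((1+x^{\varepsilon})^{\eta}k(x)\big)\big|_{x=a}$ is nonzero: since $k(0)\neq 0$ gives $a \neq 0$, and $1+x^{\varepsilon}$ has only simple roots, the factor $(1+x^{\varepsilon})^{\eta-1}$ survives differentiation at most once, so for $\eta\geq 1$ the derivative at such $a$ is $\eta(1+x^{\varepsilon})^{\eta-1}\cdot\varepsilon x^{\varepsilon-1}k(x)\big|_{x=a}\neq 0$ when $\eta=1$, and one must look at the lowest-order surviving term when $\eta=2$; in all cases the summand is $0$ or undefined-but-removable, so extending the sum to $K^{-1}(0)$ adds only zero terms. (When $\eta=0$ there is nothing to do, $K=k$.) A subtlety: I should also confirm $K$ has no repeated roots, which follows because $k$ has none, $1+x^{\varepsilon}$ has none, and $k(0)\neq 0$ together with the roots of $1+x^{\varepsilon}$ being nonzero means $k$ and $1+x^{\varepsilon}$ share no root.

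Once the sum is over all roots of the single polynomial $K$ with $K$ squarefree, I would invoke the classical Jacobi residue theorem: for a squarefree polynomial $K$ of degree $d$ and any polynomial $h$ with $\deg(h) \leq d-2$, one has $\sum_{a\in K^{-1}(0)} \frac{h(a)}{K'(a)} = 0$. This is standard — it is the statement that the sum of residues of the rational function $h(x)/K(x)$ over all its poles (all finite, all simple, with residue $h(a)/K'(a)$) vanishes, because the residue at infinity is zero when $\deg h \leq \deg K - 2$. I would apply this with $h(x) \coloneqq (1+x^{\varepsilon})^{\eta} g(x)$ and $K$ as above. The degree hypothesis to check is $\deg(h) \leq \deg(K) - 2$, i.e. $\varepsilon\eta + \deg(g) \leq \varepsilon\eta + \deg(k) - 2$, i.e. $\deg(g) \leq \deg(k) - 2$; but the hypothesis of the lemma gives the stronger $\deg(g) \leq \deg(k) - \varepsilon\eta - 2$, which certainly implies it. So the degree count goes through with room to spare.

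The only real content beyond bookkeeping is the verification that the terms we add when passing from $k^{-1}(0)$ to $K^{-1}(0)$ genuinely vanish — i.e. handling the $\eta=2$ case of the derivative computation at a root of $1+x^{\varepsilon}$ carefully, and confirming that such a point is not actually a pole of the rational function $h/K$ (it is a removable singularity since $(1+x^{\varepsilon})^{\eta}$ divides $h$ with $\eta$ matching the multiplicity in $K$). I expect this to be the main, though minor, obstacle; everything else is the classical residue theorem plus a degree inequality. An alternative, cleaner route avoiding the extension-of-sum argument entirely: directly compute $\frac{d}{dx}\big((1+x^{\varepsilon})^{\eta}k(x)\big)\big|_{x=a}$ for $a\in k^{-1}(0)$, which equals $(1+a^{\varepsilon})^{\eta}k'(a)$ since the other term carries a factor $k(a)=0$; then $\frac{(1+a^{\varepsilon})^{\eta}g(a)}{(1+a^{\varepsilon})^{\eta}k'(a)} = \frac{g(a)}{k'(a)}$, reducing \eqref{opop} to $\sum_{a\in k^{-1}(0)} \frac{g(a)}{k'(a)} = 0$, which is the classical Jacobi theorem for $k$ squarefree with $\deg(g)\leq \deg(k)-2$ — and this is implied by $\deg(g)\leq \deg(k)-\varepsilon\eta-2$. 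This second route is shorter and I would present it as the main argument, noting that the cancellation of $(1+a^{\varepsilon})^{\eta}$ is exactly why the statement is phrased with the weighted derivative in the denominator.
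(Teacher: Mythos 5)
Your second, ``cleaner'' route is exactly the paper's proof: at each root $a$ of $k$ the weighted derivative reduces to $(1+a^{\varepsilon})^{\eta}k'(a)$ because the other term carries the factor $k(a)=0$, the factor $(1+a^{\varepsilon})^{\eta}$ cancels, and the classical Jacobi residue theorem applies since $\deg(g)\leq\deg(k)-\varepsilon\eta-2\leq\deg(k)-2$. So the proposal is correct and takes essentially the same approach as the paper (which, like you, tacitly uses that $k$ and $1+x^{\varepsilon}$ share no root, so the cancelled factor is nonzero --- a condition satisfied in all of the paper's applications).
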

\begin{proof} If $ {\zeta}=0$, the statement is Jacobi's residue theorem exactly (see, e.g., \cite[Section 6]{tran2023adjoint}).
Thus, we may suppose ${\zeta}=2 $. Note that the derivative of $(1+x^{{\theta}})^{\zeta} k(x) $
is $ {\zeta} {\theta} x^{\theta-1}(1+x^{{\theta}})^{{\zeta} -1} k(x)+ (1+x^{{\theta}})^{\zeta} k'(x)$.
Hence, the left hand side of \eqref{opop} is computed as
$\sum_{a \in k^{-1}(0)} g (a)/ k'(a) $, which is equal to zero by the residue theorem.
\end{proof}

\subsection{Proof of Theorem \ref{thm1} with $n=-1$}\label{subsecProof}

We suppose $n=-1$ and give the proof of Theorem \ref{thm1}.
Recall the fact that $M=S^3_{p/1}(4_1)$ and $M=S^3_{1/q}(4_1)$ are hyperbolic if and only if $|p|\geq 5$ and $|q|\geq2$, respectively{; see, e.g., Theorem 4.7 of \cite{Thurston}}.

First, we focus on the case where $p\geq 5$, $M=S^3_{p/1}(4_1)$, and $p$ is not divisible by $4$.
From the definition of $Q_M(x)$ and Theorem \ref{thm2-1}, we can easily verify
\begin{equation}\label{Eq:proof2}
\frac{1}{\tau_{\varphi_a}(M)}= \frac{2(1-a^2)^3( 1+a^2)a^{p-5} }{Q_M ' (a)}
\qquad \textrm{for any } a \in \left(Q_M^{-1}(0)\cap D\right)\setminus \{\pm \sqrt{-1}\}.
\end{equation}
If $p -2$ is divisible by $4$, we replace $g(x)$ and $k(x)$ by $2(1-x^2)^3( 1+x^2)x^{p-5} $ and $Q_M(x)$, respectively.
Then, Lemma \ref{lem5} with $\zeta=0$ deduces to the required conclusion as
\begin{equation}\label{Eq:proof4} 0 = \sum_{ a \in Q_M^{-1} (0) } \frac{g(a)}{Q_M'(a)} =\sum_{ a \in Q_M^{-1} (0) } \frac{1}{\tau_{\varphi_a}(M)}= 2 \sum_{ a \in Q_M^{-1} (0) \cap D } \frac{1}{\tau_{\varphi_a}(M)} = \sum_{ \varphi \in R_{G}^{\rm irr} } \frac{2}{\tau_{\varphi}(M)}.\end{equation}
Here, the second, third, and fourth equalities immediately follow from \eqref{Eq:proof2}, Remark \ref{remTor} (iii), and Proposition \ref{prop1-1}, respectively.
Meanwhile, when $p -1$ is divisible by $2$, we replace $g(x)$ and $k(x)$ by $2(x-1)( x^4 -1 )x^{p-5} $ and $Q_M(x)/(1+x)^2$, respectively.
Then, we can readily show similar equalities to \eqref{Eq:proof4}.

We further discuss the case of $p/4 \in \Z$. By Lemma \ref{lem4},
$ Q_{M}(x)/(1+x^2) $ lies in $\Z[x]$, and has no double roots.
We let $g(x)$ and $k(x)$ be $2(1-x^2)^3 x^{p-5} $ and $Q_M(x)/(1+x^2)$, respectively.
By Lemma \ref{lem5} with $\zeta=1$ and $\theta =2$, we have
\[ \begin{split}0 &=
\sum_{ a \in k^{-1} (0) } \frac{g(a)}{k'(a)} =
\frac{g( \sqrt{-1})}{k'( \sqrt{-1})}+ \frac{g(- \sqrt{-1})}{k'(- \sqrt{-1})}+\sum_{ a \in Q_M^{-1} (0) \cap D \setminus \{ \pm \sqrt{-1}\} } \frac{2}{\tau_{\varphi_a}(M)}\\
&= \frac{32 \sqrt{-1}}{20-p^2} + \sum_{ a \in Q_M^{-1} (0) \cap D \setminus \{ \pm \sqrt{-1}\} } \frac{2}{\tau_{\varphi_a}(M)} \\
&= \frac{2}{\tau_{\varphi_{\sqrt{-1}}}(M)} +\frac{2}{\tau_{\varphi_{-\sqrt{-1}}}(M)} + \sum_{ a \in Q_M^{-1} (0) \cap D \setminus \{ \pm \sqrt{-1}\} } \frac{2}{\tau_{\varphi_a}(M)} \\
&= \sum_{ a \in Q_M^{-1} (0) \cap D } \frac{2}{\tau_{\varphi_a}(M)}
= \sum_{ \varphi \in R_{G}^{\rm irr} } \frac{2}{\tau_{\varphi}(M)}, 
\end{split}\]
which is the required vanishing identity. Here, the second, fourth, and sixth equalities follow from \eqref{Eq:proof2}, Theorem \ref{thm2-1}, and Proposition \ref{prop1-1}, respectively.

Next, we focus on the case of $q\geq 2$ and $M=S^3_{1/q}(4_1)$.
Similarly to \eqref{Eq:proof2}, we can show
\begin{equation}\label{Eq:proof3}
\frac{1}{\tau_{\varphi_a}(M)}= \frac{2 (a^{4 q}-1)^3 (a^{4 q}-(a^2+a+1) a^{2 q-1}+1) }{
\frac{\rm d \ }{ {\rm d}x }(x^{4q+1} Q_M (x))|_{x=a}}
\qquad \textrm{for any } a \in Q_M^{-1}(0)\cap D .
\end{equation}
By a Euclidean Algorithm,
we can choose a polynomial $h(x) \in \Q[x]$ such that
\[ 2 (x^{4 q}-1)^3 (x^{4 q}-(x^2+x+1) x^{2 q-1}+1) \equiv x^{4q+1} h (x) \qquad ({\rm modulo} \ Q_M(x)), \]
and $\mathrm{deg}h(x ) < 8q -2$.
Recall from Lemma \ref{lem4} that $Q_M(x) $ is divisible by $(1+x)^2 $; thus so is $h(x)$.
In summary, we can define polynomials $g(x)$ and $k(x)$ to be $h(x)/ (1+x)^2$ and $Q_M(x)/(1+x)^2$, respectively.
Then, Lemma \ref{lem5} with $\zeta=2$ and $\theta =1$ readily leads to the same equalities as \eqref{Eq:proof4}.

The proof of the cases of $p\leq-5$ and $q \leq -2$ can be shown in the same manner; so we here do not carry out the detailed proof.

Finally, in the remaining cases of $|p|\leq4$ for $M=S^3_{p/1}(4_1)$, 
we can obtain the following by a direct calculation:
\[\begin{split}
\sum_{\varphi\in R^{\rm irr}_G(M)}\frac{1}{\tau_{\varphi}(M)}&=
\begin{cases}
2, & \text{ if }p\in \{0,\pm1,\pm2,\pm3\},\\
8, & \text{ if }p\in \{\pm4\}.
\end{cases}
\end{split}\]
For example, we now discuss the detail in the case $p=4$ for $M=S^3_{p/1}(4_1)$.
The roots of $Q_M(x)=x^2 + 2 x^4 + x^6=0$ are $x=\pm \sqrt{-1}$.
By Theorem \ref{thm2-1}, we have $\tau_{\varphi_{\sqrt{-1}}}(M)= (5-2 \sqrt{5})/4$ and $\tau_{\varphi_{-\sqrt{-1}}}(M)=(5+2 \sqrt{5})/4$, leading to $\sum_{\varphi\in R^{\rm irr}_G(M)}\tau_{\varphi}(M)^{-1}=8$.
Similarly, the computations in the other cases run well.

\section{Surgeries on the $5_2$-knot}\label{sec52}
We discuss Conjecture \ref{conj1} in the case of $M=S^3_{1/q}(K)$, when $K$ is the $5_2$-knot and $|q|\geq 3$.
Since the outline of the discussion in this section is almost the same as that in Section \ref{secProof}, we now roughly describe the discussion.

As in \eqref{Eq:pres}, {according to \cite{NosakaJMSCT22},} the fundamental group $\pi_1(S^3_{1/q}(5_2))$ is known to be presented as
\[
\pi_1(M)\cong\langle x_1,x_2,{x_3},{x_4}\, |\,
{x_3}x_1^2x_2^{-1}{x_3}^{-1}x_1^{-2},
{x_3}x_2^{-1}{x_3}^{-1}x_1^{-1}x_2,
{x_3}[x_1^2,x_2^{-1}]^q,
{x_4}[x_1^2,x_2^{-1}]^{-1}
\rangle
.
\]
Recall the free groups $F$, $P$, and the homomorphism $\psi$ in Section \ref{subsecPres}.
Let $W\in P*F$ be
\[\rho_1\cdot x_1^2\rho_2x_1^{-2} \cdot (x_1^2x_2^{-1}x_1^{-1})\rho_1^{-1}(x_1^2x_2^{-1}x_1^{-1})^{-1} \cdot (x_1^2x_2^{-1}x_1^{-2}x_2)\rho_2^{-1}(x_1^2x_2^{-1}x_1^{-2}x_2)^{-1} \cdot \rho_4^{-1} \cdot x_4\rho_3x_4^{-1} \cdot \rho_4 \cdot \rho_3^{-1}.\]
Then, each $\delta^*$ can be written as in \eqref{Eq:delta} according to \cite[\S3.1]{NosakaJMSCT22}.
Let $Q_M(x)$ be the polynomial of the form
\[Q_M(x)\coloneqq 1-x-2x^{2q}-x^{4q-1} -2x^{4q}+x^{6q-1}+x^{8q}-2x^{10q-1}-x^{10q}-2x^{12q-1}-x^{14q-2}+x^{14q-1}.
\]
The same statement in Proposition \ref{prop1-2} holds for $M=S^3_{1/q}(5_2)$ and $q\neq 0$,
namely, $R^{\rm irr}_G(M)$ is bijective to $Q_M^{-1}(0)\cap D$.
For $a\in Q_M^{-1}(0)\cap D$, let us denote the representative $\SL_2(\C)$-representation of $\Phi_M^{-1}(a)$ by $\varphi_a$ as in Theorem \ref{thm2-1}. Then, the adjoint torsion $\tau_{\varphi_a}(M)$ is computed as
\[ \tau_{\varphi_a}(M)= -{P(a)}/{(2a^2(a^2-1)^4})\] with the help of a computer program of Mathematica.
Here, $P(a)\in \Z[a]$ is a polynomial defined by setting
\[\begin{split}
P(a)=& 1-2 q+a
(28 q+2)+a^2 (3-42 q)+a^3 (36 q-8)+a^4 (2-20 q)\\
&+a^{2q}\left((4 q-1) a^{-1}+18 q-3 +(3-32 q) a+(4-54 q) a^{2}-2 a^{3}+(8 q-1)
a^{4}\right) \\
&+a^{4q}\left((1-4 q) a^{-1}-10 q +(-8 q-3) a+(38 q-4) a^{2}+(5-34 q) a^{3}+(1-10 q) a^{4}\right) \\
&+a^{6q}\left((10 q-1) a^{-1}+(18 q+2) a+(7-56 q) a+(74 q-8) a^{2}+10 q
a^{3}\right)\\
&+a^{8q}\left((14 q-3) a^{-1} +18 q +(9-76 q) a-3 a^{2}+(16 q-3) a^{3}\right)\\
&+a^{10q}\left((24
q-2) a^{-1} +(1-10 q) a+(2-52 q) a+(-18 q-1) a^{2}\right)\\
&+a^{12q}\left((4 q-1) a^{-2}+8 q a^{-1}+5-62 q+(56 q-6) a+(2-6 q) a^{2}\right).\\
\end{split}\]

In addition, when $G=\SL_2(\C)$, $M=S^3_{1/q}(5_2)$, and $n=-1$,
we can show that Conjecture \ref{conj1} is true for any integers $q\neq0$.
The proof can be shown in the same fashion as Section \ref{subsecProof}.
However, the concrete substitutions of $g(x)$ and $k(x)$ into Lemma \ref{lem5} are slightly complicated.
For this reason, we do not go into detailed proof in this paper.

Incidentally, we give comments on the case $M=S^3_{p/1}(5_2)$ with $p\in \Z$.
With the help of a computer program, we can similarly obtain the polynomial $Q_M(x)$ and
determine the associated torsions $\tau_{\varphi} (M)$. However, the resulting computation of $\tau_{\varphi} (M)$ is more intricate; we do not describe the details.
More generally, to show Conjecture \ref{conj1} with $M=S^3_{p/q}(K)$ for other (twist) knots $K$,
we might need other ideas. This is a subject for future analysis.

\section{The conjecture with $n>0$}\label{secother}
We end this paper by discussing Conjecture \ref{conj1} with $n>0$.
Hereafter, we assume that $G=\mathrm{SL}_2(\mathbb{C})$, $M$ is a closed $3$-manifold, and $R^{\rm irr}_{G}(M)$ is of finite order as above.

First, it is almost obvious that the sum \eqref{key1} is a real number: precisely,
\begin{prop}\label{prop4}
Let $n \in \Z$.
The imaginary part of the sum $\sum_{\varphi \in R^{\rm irr}_{G}(M) } \tau_{\varphi}{(M)}^n $ is zero.
\end{prop}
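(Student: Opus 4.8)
The statement is that $\sum_{\varphi \in R^{\rm irr}_G(M)}\tau_\varphi(M)^n \in \R$, i.e.\ the sum is invariant under complex conjugation. The natural strategy is to exhibit an involution on $R^{\rm irr}_G(M)$ under which the torsion transforms by complex conjugation, so that the sum is term-by-term conjugation-symmetric. First I would recall that, for $G=\SL_2(\C)$, complex conjugation acts on representations: if $\varphi:\pi_1(M)\to \SL_2(\C)$ is a homomorphism, then $\overline{\varphi}$, defined by applying entrywise complex conjugation to every matrix $\varphi(g)$, is again a homomorphism, and it is irreducible precisely when $\varphi$ is. Thus $[\varphi]\mapsto [\overline{\varphi}]$ is a well-defined involution on $R^{\rm irr}_G(M)$, and it permutes the (finite) set, so $\sum_\varphi \tau_\varphi(M)^n = \sum_\varphi \tau_{\overline\varphi}(M)^n$.

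\textbf{Key step: the torsion of the conjugate.} The heart of the argument is the identity $\tau_{\overline{\varphi}}(M) = \overline{\tau_\varphi(M)}$. To see this, go back to the definition in Section~\ref{subsecAdj}. The adjoint cochain complex $C^*_{\overline\varphi}(M;\mf g)$ is obtained from $C^*_\varphi(M;\mf g)$ by replacing the $\Z[\pi_1(M)]$-module structure on $\mf g = \mathfrak{sl}_2(\C)$ (coming from $\Ad\circ\varphi$) by the one coming from $\Ad\circ\overline\varphi$. Since $\Ad(\overline{A}) = \overline{\Ad(A)}$ for $A\in\SL_2(\C)$ (the adjoint action is polynomial with real — indeed integer — coefficients in the matrix entries), every entry of every differential matrix $\delta^i$ in \eqref{Eq:delta} for $\overline\varphi$ is the complex conjugate of the corresponding entry for $\varphi$, once one chooses the same real cellular data and a basis $\mca B$ of $\mf g$ that is itself fixed by conjugation (e.g.\ a basis with real entries, available since $\mathfrak{sl}_2(\C) = \mathfrak{sl}_2(\R)\otimes_\R \C$). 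Because the Reidemeister torsion $\Tor(C^*,\mb c^*)$ is built out of the $\delta^i$ by rational operations (determinants of transition matrices, products of powers), it satisfies $\Tor(C^*_{\overline\varphi}(M;\mf g),\mb c^*) = \overline{\Tor(C^*_\varphi(M;\mf g),\mb c^*)}$; here acyclicity of one complex is equivalent to acyclicity of the other. The sign factor $(\tau_M)^{\dim\mf g}$ is $\pm1$, hence real and unchanged. Therefore $\tau_{\overline\varphi}(M) = \overline{\tau_\varphi(M)}$. (In the non-acyclic case both sides equal $1$, again consistent.)

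\textbf{Conclusion.} Combining the two steps,
\[
\sum_{\varphi\in R^{\rm irr}_G(M)}\tau_\varphi(M)^n \;=\; \sum_{\varphi\in R^{\rm irr}_G(M)}\tau_{\overline\varphi}(M)^n \;=\; \sum_{\varphi\in R^{\rm irr}_G(M)}\overline{\tau_\varphi(M)^n} \;=\; \overline{\sum_{\varphi\in R^{\rm irr}_G(M)}\tau_\varphi(M)^n},
\]
so the sum equals its own conjugate and is real; equivalently its imaginary part vanishes.

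\textbf{Main obstacle.} The only delicate point is making the claim $\tau_{\overline\varphi}(M) = \overline{\tau_\varphi(M)}$ genuinely independent of choices: one must be careful that the identification $C^*_{\overline\varphi}\cong \overline{C^*_\varphi}$ of complexes uses the \emph{same} cellular decomposition, the same lifts $\widetilde{\mb c_i}$, and a conjugation-invariant orthogonal basis $\mca B$ of $\mf g$ — and then invoke the stated fact that $\tau_\varphi(M)$ is independent of these choices to conclude the identity holds unconditionally. Once this bookkeeping is in place, everything else is formal. One could alternatively phrase the whole proof at the level of the explicit formulas of Theorem~\ref{thm2} and Remark~\ref{remTor}(iii)–(iv) for the manifolds actually treated in the paper, observing directly that the listed torsions come in conjugate pairs indexed by $a\leftrightarrow\overline a$; but the conceptual argument above has the advantage of applying to any closed $M$ with $R^{\rm irr}_G(M)$ finite, which is the generality in which Proposition~\ref{prop4} is stated.
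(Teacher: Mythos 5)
Your argument is correct and follows essentially the same route as the paper: the paper also uses the conjugation involution $[\varphi]\mapsto[\overline{\varphi}]$ on $R^{\rm irr}_G(M)$ together with the identity $\tau_{\overline{\varphi}}=\overline{\tau_{\varphi}}$, and then pairs up the terms of the sum. Your write-up merely spells out in more detail why $\tau_{\overline{\varphi}}=\overline{\tau_{\varphi}}$ holds, which the paper asserts ``by definition.''
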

\begin{proof}
For a homomorphism $\varphi: \pi_1(M) \ra G$, we denote by $\bar{\varphi}$ the conjugate representation.
Then, $ \tau_{ \bar{\varphi}}{(M)} = \overline{\tau_{\varphi}{(M)}} $ by definition.
Since we can select representatives $ \varphi_1, \dots , \varphi_m ,\overline{\varphi_1}, \dots , \overline{\varphi_m}, \eta_1 ,\dots, \eta_n $ of $ R^{\rm irr}_{G }(M) $ such that $ [\eta_i]= [\overline{\eta_i}] \in R^{\rm irr}_{G }(M)$, the imaginary part is zero as required.
\end{proof}
Furthermore, we will discuss the rationality of the sum \eqref{key1}, with $G=\SL_2(\C)$.
For a subfield $F \subset \C$, let $R_{\mathrm{SL}_2(F)}^{\rm irr}(M) $ be the set of the conjugacy classes of all irreducible representations $\pi_1(M) \ra \mathrm{SL}_2(F) .$
\begin{prop}\label{prop5}
Let $F/\Q$ be a Galois extension with embedding $F \hookrightarrow \C.$
Suppose that 
the inclusion $R_{\mathrm{SL}_2(F)}^{\rm irr}(M) \subset R_{\mathrm{SL}_2(\C)}^{\rm irr}(M) $ is bijective as a finite set, and is closed under the Galois action of $\mathrm{Gal}(F/\Q)$.
Then, for any $n\in \Z$, the sum $\sum_{\varphi \in R^{\rm irr}_G(M)} \tau_{\varphi}{(M)}^n $ is a rational number. 
\end{prop}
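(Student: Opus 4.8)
The plan is to exploit the fact that the adjoint Reidemeister torsion is an algebraic invariant built out of Fox derivatives and the adjoint action, hence its value is computed by a rational formula in the matrix entries of $\varphi$. More precisely, for $\varphi \in R_{\mathrm{SL}_2(F)}^{\rm irr}(M)$, all the differentials $\delta^i$ in \eqref{Eq:delta} have entries in $\mathrm{Mat}_3(F)$ once we use the identification $\mathfrak{g} \cong \C^3$ together with a $\Q$-rational orthogonal basis of $\mathfrak{sl}_2$ (e.g.\ the standard Chevalley-type basis), and the $\tau$-chain method of \cite[\S2.1]{Turaev01} produces $\tau_{\varphi}(M)$ as a ratio of determinants of submatrices of these $\delta^i$; hence $\tau_{\varphi}(M) \in F^\times$. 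The first step is therefore to record this: the Galois action of $\sigma \in \mathrm{Gal}(F/\Q)$ on a representation $\varphi$ (applied entrywise to matrices) gives another representation $\varphi^\sigma$, still irreducible and still landing in $\mathrm{SL}_2(F)$, and naturality of the Fox-derivative/adjoint-action construction yields $\tau_{\varphi^\sigma}(M) = \sigma(\tau_{\varphi}(M))$. One must note that $\tau_\varphi(M)$ depends only on the conjugacy class; since conjugation by $P \in \mathrm{SL}_2(\C)$ can be replaced, for a pair of conjugate $F$-representations, by conjugation by some $P \in \mathrm{GL}_2(F)$ (the space of intertwiners is $F$-rational and one-dimensional by irreducibility), the assignment $[\varphi] \mapsto \tau_\varphi(M)$ descends to a well-defined map $R_{\mathrm{SL}_2(F)}^{\rm irr}(M) \to F^\times$ compatible with the $\mathrm{Gal}(F/\Q)$-action on the source.

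The second step is the averaging argument. By hypothesis the finite set $R_{\mathrm{SL}_2(F)}^{\rm irr}(M) = R_{\mathrm{SL}_2(\C)}^{\rm irr}(M) = R_G^{\rm irr}(M)$ is stable under $\mathrm{Gal}(F/\Q)$. Consider
\[
S \coloneqq \sum_{\varphi \in R_G^{\rm irr}(M)} \tau_{\varphi}(M)^n \in F.
\]
For any $\sigma \in \mathrm{Gal}(F/\Q)$, applying $\sigma$ and using $\tau_{\varphi^\sigma}(M)^n = \sigma(\tau_\varphi(M)^n)$ together with the fact that $\varphi \mapsto \varphi^\sigma$ permutes $R_G^{\rm irr}(M)$, we get $\sigma(S) = S$. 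Hence $S$ lies in the fixed field $F^{\mathrm{Gal}(F/\Q)} = \Q$ (this is where we use that $F/\Q$ is Galois). This gives the conclusion. Note $\tau_\varphi(M) \neq 0$ always, so $\tau_\varphi(M)^n$ makes sense for negative $n$ as well.

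The main obstacle is the first step: one has to be careful that the torsion really is \emph{entrywise $F$-rational} and that the Galois action commutes with the whole construction. Two points need attention. First, the orthogonal basis $\mathcal{B}$ of $\mathfrak{g}$ and the sign $\tau_M \in \{\pm 1\}$ in the definition of $\tau_\varphi(M)$ must be chosen over $\Q$ (equivalently, independent of $\varphi$); since $\mathfrak{sl}_2$ has a $\Q$-rational orthogonal basis for the Killing form and $\tau_M$ depends only on $M$, not on $\varphi$, this is harmless and $\sigma$ fixes it. Second, one must argue that passing to conjugacy classes does not leave the field $F$: if $\varphi$ and $\psi$ are irreducible $\mathrm{SL}_2(F)$-representations that are $\mathrm{SL}_2(\C)$-conjugate, then by Schur's lemma the intertwiner space $\mathrm{Hom}_{\pi_1(M)}(\varphi, \psi)$ is a $1$-dimensional $F$-vector space, so a nonzero intertwiner $P$ may be taken in $\mathrm{GL}_2(F)$, and after scaling $P \in \mathrm{SL}_2(F)$ (possibly over a quadratic extension, but the value $\tau_\varphi(M)$ is unchanged and this does not affect membership in $F$ since the torsion was already shown class-independent); alternatively one simply invokes that $\tau_\varphi(M)$, being conjugation-invariant, is a \emph{symmetric} rational function of the data and hence its value is computed from the $\Q$-rational trace coordinates on the character variety, which are $\sigma$-equivariant by construction. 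Once this rationality-plus-equivariance is in place, the averaging is a one-line Galois-descent argument.
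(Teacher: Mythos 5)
Your proposal is correct and follows essentially the same route as the paper's own proof: establish that $\tau_{\varphi}(M)\in F^{\times}$ and that $\tau_{\bullet}$ is $\mathrm{Gal}(F/\Q)$-equivariant on $R_{\mathrm{SL}_2(F)}^{\rm irr}(M)$, then conclude that the sum lies in the fixed field $F^{\mathrm{Gal}(F/\Q)}=\Q$. The only difference is that you spell out the $F$-rationality and conjugacy-class well-definedness in detail, which the paper treats as immediate "by definition."
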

\begin{proof} By definition, $\tau_{\varphi}(M) \in F^{\times} $, and the map $\tau_{\bullet}(M): R_{\mathrm{SL}_2(F)}^{\rm irr}(M) \ra F^{\times} $ is $ \mathrm{Gal}(F/\Q) $-equivariant.
Thus, the sum lies in the invariant part $F^{\mathrm{Gal}(F/\Q)}$.
Hence, by $F^{\mathrm{Gal}(F/\Q)}=\Q $, the sum \eqref{key1} lies in $\Q$ as desired.
\end{proof}
\begin{cor}\label{prop6} Suppose that $p$ is even, and is relatively prime to $q$. 
Let $K$ be a twist knot, and $M$ be $S^3_{p/q}(K).$
Then, for any $n\in \Z$, the sum $\sum_{\varphi \in R^{\rm irr}_G(M)} \tau_{\varphi}{(M)}^n $ is a rational number.
\end{cor}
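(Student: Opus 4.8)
The plan is to deduce Corollary \ref{prop6} from Proposition \ref{prop5}, so the whole task reduces to verifying the two hypotheses of that proposition for a suitable number field $F$: namely that some Galois extension $F/\Q$ (with a chosen embedding into $\C$) contains \emph{all} the irreducible $\SL_2(\C)$-representations of $M=S^3_{p/q}(K)$ up to conjugacy, that this finite set $R^{\rm irr}_{\SL_2(F)}(M)$ really exhausts $R^{\rm irr}_{\SL_2(\C)}(M)$, and that it is stable under $\mathrm{Gal}(F/\Q)$. First I would recall that for a twist knot $K$, the knot group $\pi_1(S^3\setminus K)$ has a two-generator presentation, so the $\SL_2(\C)$-character variety of the exterior is an affine curve cut out over $\Q$ (or over $\Z$) by the trace polynomials; imposing the surgery relation coming from $p/q$-filling adds one more equation with coefficients in $\Z$. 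Hence $R^{\rm irr}_{\SL_2(\C)}(M)$ is the set of $\C$-points of a $0$-dimensional scheme $X$ defined over $\Q$, and since it is finite (this is exactly the hypothesis ``$R^{\rm irr}_G(M)$ is of finite order'' under which Conjecture \ref{conj1} is stated), all of these points have coordinates algebraic over $\Q$. Taking $F$ to be the Galois closure over $\Q$ of the (finite) field generated by all coordinates of all these points gives a finite Galois extension $F/\Q$, and a fixed embedding $F\hookrightarrow\C$ makes $R^{\rm irr}_{\SL_2(F)}(M)=R^{\rm irr}_{\SL_2(\C)}(M)$.

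Next I would check Galois stability. The key point is that the defining equations of $X$ have coefficients in $\Q$, so $\mathrm{Gal}(\bar\Q/\Q)$ permutes $X(\bar\Q)$; since conjugacy classes of representations are cut out by trace coordinates which are also defined over $\Q$, the Galois action descends to a permutation of $R^{\rm irr}_{\SL_2(F)}(M)$ inside $R^{\rm irr}_{\SL_2(\C)}(M)$. A subtle point worth spelling out: a priori a Galois conjugate of an \emph{irreducible} representation could land on a \emph{reducible} one, but reducibility is itself a condition defined over $\Q$ (vanishing of the appropriate trace polynomial, e.g. $\mathrm{tr}(\rho([x_1,x_2]))=2$ for the commutator), so the irreducible locus is Galois-invariant. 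This gives exactly the second hypothesis of Proposition \ref{prop5}. Then Proposition \ref{prop5} applies verbatim: $\tau_\bullet$ is $\mathrm{Gal}(F/\Q)$-equivariant, so $\sum_{\varphi}\tau_\varphi^n$ lies in $F^{\mathrm{Gal}(F/\Q)}=\Q$.

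The role of the hypothesis that $p$ is even and coprime to $q$ deserves comment, since it is where the twist-knot input really enters. For $p$ even, the meridian $\mf{m}$ lifts to $\SL_2$ in a way that makes the boundary torus condition ($\mf{m}^p[\text{longitude}]^q=1$) compatible with the two-generator structure without forcing an obstruction class in $H^2(\pi_1(M);\Z/2)$; concretely, for twist knots the relevant computations (as in \cite{NosakaJMSCT22} and the explicit $Q_M$ in this paper for $4_1$ and $5_2$) produce a presentation over $\Q$ whose character scheme is finite precisely in this regime. So the plan is to invoke the known fact (for twist knots) that under ``$p$ even, $\gcd(p,q)=1$'' the set $R^{\rm irr}_{\SL_2(\C)}(M)$ is finite and each representative can be conjugated into $\SL_2$ of a number field (indeed one reads this off the parametrization, as in the proof of Proposition \ref{prop1}, where every entry of $\varphi(x_1)$ is a fixed algebraic function of the root $a$ of $Q_M$).

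The main obstacle I anticipate is not the Galois descent — that is formal once the field $F$ is identified — but rather pinning down cleanly the assertion that $R^{\rm irr}_{\SL_2(\C)}(M)$ is finite and number-theoretic for \emph{all} twist knots and all even $p$ coprime to $q$, rather than just for $4_1$ and $5_2$ where the explicit polynomial $Q_M$ is exhibited. One must either cite a general structural result about twist-knot character varieties or argue directly from a uniform two-bridge presentation that the surgered character scheme is $0$-dimensional and defined over $\Q$; the even-$p$ hypothesis is exactly what prevents the spurious ``$\pm$'' ambiguity in lifting the filling slope and keeps the scheme reduced and finite. Once that is in hand, choosing $F$ as the Galois closure of the compositum of all residue fields of the finitely many points of that scheme, and noting that this field is independent of the ambient embedding up to isomorphism, completes the verification of the hypotheses of Proposition \ref{prop5}, and the corollary follows.
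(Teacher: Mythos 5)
Your overall strategy coincides with the paper's: reduce the corollary to Proposition \ref{prop5} by exhibiting a Galois extension $F/\Q$ satisfying its two hypotheses. But the paper's entire proof consists of citing \cite[Section 2]{nosaka2022reciprocity} for the existence of such an $F$ for twist knots with $p$ even and $\gcd(p,q)=1$, whereas you attempt to manufacture $F$ by hand, and this is where the gaps lie. First, taking the Galois closure of the field generated by the coordinates of the zero-dimensional character scheme only guarantees that the \emph{characters} take values in $F$. The bijectivity hypothesis of Proposition \ref{prop5} requires that every irreducible $\SL_2(\C)$-representation be conjugate into $\SL_2(F)$, and for an irreducible representation whose traces lie in $F$ the obstruction to this is a quaternion algebra over $F$, which in general splits only after a further quadratic extension. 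So you must enlarge $F$, re-take a Galois closure, and then re-verify both bijectivity and Galois stability for the enlarged field; none of this is done. Your fallback, "one reads this off the parametrization, as in the proof of Proposition \ref{prop1}," only covers $4_1$ (and $5_2$), not an arbitrary twist knot, as you yourself note.

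Second, and more seriously, the hypothesis that $p$ is even and relatively prime to $q$ is never actually used in your argument: you offer only a speculative heuristic about lifting the filling slope and an $H^2(\pi_1(M);\Z/2)$ obstruction, and then concede that one must "either cite a general structural result about twist-knot character varieties or argue directly." That unresolved step is precisely the mathematical content of the corollary, and it is exactly what the paper outsources to \cite[Section 2]{nosaka2022reciprocity}. As written, your proposal establishes the formal Galois-descent skeleton (which is already the content of Proposition \ref{prop5}) but leaves the existence of the required $F$ --- the one place where the twist-knot structure and the parity of $p$ enter --- unproved, so the proof is incomplete.
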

\begin{proof} As is shown in \cite[Section 2]{nosaka2022reciprocity}, there is a Galois extension $F/\Q$ satisfying the condition in Proposition \ref{prop5}.
\end{proof}
Meanwhile, the integrality of the sum \eqref{key1} with $ M =S^3_{p/q}(K)$ remains a future problem.
When $K $ is either $4_1$- or $5_2$-knot,
we know the resulting computation of $\tau_{\varphi}(M) $ by Theorem \ref{thm2-1}, Theorem \ref{thm2-2}, and Section \ref{sec52}.
Accordingly, it is not so hard to check numerically the conjecture from the computation of $\tau_{\varphi}(M) $ for some small $p,q$.

However, we give the proof of the conjecture multiplied by $2^{2n+1}$ with $ M =S^3_{2m/1}(4_1)$. Precisely,
\begin{prop}\label{prop4}
As in Theorem \ref{thm1}, let $ M =S^3_{2m/1}(4_1)$.
If $n>1$, then the 8-fold sum $2\sum_{\varphi \in R^{\rm irr}_G(M)} ( 8 \tau_{\varphi}(M))^{n}$
is an integer.
\end{prop}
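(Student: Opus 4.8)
The plan is to exploit the explicit formula for $\tau_{\varphi_a}(M)$ from Theorem \ref{thm2} and reduce the claim to a statement about a sum over the roots of $Q_M(x)$, then invoke the algebraic structure of those roots. Write $M = S^3_{2m/1}(4_1)$ with $p=2m$. By Proposition \ref{prop1} and Remark \ref{remTor}(iii), the sum over $R^{\rm irr}_G(M)$ equals half the sum over all of $Q_M^{-1}(0)$ (with the appropriate handling of the roots $\pm\sqrt{-1}$ when $4 \mid p$, as in Lemma \ref{lem4}), so
\[
2\sum_{\varphi \in R^{\rm irr}_G(M)} (8\tau_{\varphi}(M))^n \;=\; \sum_{a \in Q_M^{-1}(0)} (8\tau_{\varphi_a}(M))^n ,
\]
at least in the case $p \equiv 2 \pmod 4$ where $Q_M$ has no repeated roots; the case $4 \mid p$ needs the factor $\kappa_p(x) = (1+x^2)^2$ divided out first, and the contributions of $\pm\sqrt{-1}$ must be added back using \eqref{Eq:tor2}. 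From \eqref{Eq:tor1}, $8\tau_{\varphi_a}(M) = -4 N(a)/\bigl((a^2-1)^3(1+a^2)\bigr)$ where $N(a) = 4-p+(-2+p)a^2+2pa^4+(2+p)a^6-(4+p)a^8+2pa^{4+p}$ is in $\Z[a]$, and the denominator also lies in $\Z[a]$; so $(8\tau_{\varphi_a}(M))^n$ is a ratio of elements of $\Z[a]$, and one can use the relation $Q_M(a)=0$ to rewrite $(a^2-1)^{-1}$ and $(1+a^2)^{-1}$ as polynomials in $a$ with rational coefficients, clearing denominators carefully.

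The key structural input is a power-sum/Newton-identity argument: the sum $\sum_{a \in Q_M^{-1}(0)} f(a)$ of any symmetric function of the roots of the monic-up-to-leading-coefficient integer polynomial $Q_M$ is a rational number whose denominator is controlled by the leading coefficient of $Q_M$, which is $1$. More precisely, if $f(a) = P_1(a)/P_2(a)$ with $P_1, P_2 \in \Z[a]$ and $\gcd(P_2, Q_M)=1$, then $\sum_{a} f(a) \in \Z[1/\mathrm{Res}(P_2, Q_M)]$; and since $Q_M$ is monic (its top coefficient is $1$) and its constant term is $1$, one gets much tighter control. The factor $8^n$ is inserted precisely to absorb the power of $2$ arising from the denominator $(a^2-1)^3(1+a^2)$ raised to the $n$-th power — each such denominator, when inverted modulo $Q_M(a)$, introduces a bounded power of $2$ because $Q_M(\pm 1)$ and $Q_M(\pm\sqrt{-1})$ are (up to sign) powers of $2$ by direct inspection of the defining formula. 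I would verify $Q_M(1) = \pm 4$ (for $p \equiv 2 \pmod 4$) or the analogous small power of $2$ after dividing out $\kappa_p$, and likewise $Q_M(-1)$ and $Q_M(\pm\sqrt{-1})$; this pins down why $8^n = 2^{3n}$ (with the extra global factor of $2$ giving $2^{2n+1}$ as stated in the weaker formulation, or $2^{3n+1}$ in the stronger one — the bookkeeping here is exactly the content of the $n>1$ hypothesis).

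The main obstacle, I expect, is the precise $2$-adic bookkeeping: showing that the denominators introduced by inverting $(a^2-1)^3(1+a^2)$ modulo $Q_M(a)$, after summing over all roots and clearing, contribute no worse than $2^{3n}$ (resp. $2^{2n+1}$ after the factor-of-$2$ normalization), uniformly in $m$. This requires controlling $\mathrm{Res}\bigl((x^2-1)^3(1+x^2), Q_M(x)/\kappa_p(x)\bigr)$ as a function of $p=2m$ and arguing it is a power of $2$ times a unit — which should follow from evaluating $Q_M$ at $\pm 1, \pm\sqrt{-1}$ and from the reciprocal/palindromic symmetry of $Q_M$, but the case split according to $p \bmod 4$ (because $\kappa_p$ differs) makes this delicate. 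A secondary technical point is handling the roots $\pm\sqrt{-1}$ when $4 \mid p$: there $8\tau_{\varphi_{\pm\sqrt{-1}}}(M) = 8 \cdot \tfrac{1}{8}(10 \pm p\sqrt{-5}) = 10 \pm p\sqrt{-5}$ is an algebraic integer in $\Q(\sqrt{-5})$, so $(8\tau_{\varphi_{\sqrt{-1}}}(M))^n + (8\tau_{\varphi_{-\sqrt{-1}}}(M))^n \in \Z$ automatically, and these terms cause no denominator at all; one just needs to add them in cleanly once the rest of the sum is shown to be integral. Once these resonances are resolved, the conclusion follows by combining the Newton-identity integrality, the $2$-adic bound, and Proposition \ref{prop4} (rationality), since a rational number with denominator a power of $2$ that has been multiplied by a sufficiently large power of $2$ is an integer.
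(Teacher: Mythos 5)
Your overall route---reduce to a sum over $Q_M^{-1}(0)$ via Proposition \ref{prop1} and Remark \ref{remTor}(iii), insert the explicit formula \eqref{Eq:tor1}, and invoke Newton-identity/symmetric-function integrality for the monic integer polynomial $Q_M$---is the same general strategy as the paper's, and your handling of the exceptional roots via \eqref{Eq:tor2} is fine (minor slip: for $a=\pm\sqrt{-1}$ one gets $10\mp p\sqrt{5}$, not $10\pm p\sqrt{-5}$; the pair still sums to an integer). The genuine gap is the quantitative step that you yourself label ``the main obstacle'' and never carry out. Your mechanism---invert $(x^2-1)^3(1+x^2)$ modulo $Q_M$ and control $\mathrm{Res}\bigl((x^2-1)^3(1+x^2),Q_M\bigr)$---only shows the sum lies in $\Z[1/\mathrm{Res}]$. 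For $p\equiv 2 \pmod{4}$ one has $\mathrm{Res}=Q_M(1)^3Q_M(-1)^3Q_M(\sqrt{-1})Q_M(-\sqrt{-1})=4^3\cdot 4^3\cdot 4\cdot 4=2^{16}$, so the $n$-th power of the denominator is bounded only by $2^{16n}$, far beyond the $2^{3n+1}$ that the prefactor $2\cdot 8^n$ can absorb; ``the resultant is a power of $2$'' therefore does not prove the proposition. What is actually needed is the sharp statement that $2\tau_{\varphi_a}(M)$ expands, modulo $Q_M$, with \emph{integer} coefficients against $1,(1-a^2)^{-1},(1-a^2)^{-2},(1-a^2)^{-3}$ and a single controlled $(1+a^2)^{-1}$-term; this is precisely the paper's Euclidean-algorithm identity \eqref{25}, used together with the identities expressing $\tfrac{4}{(1-x^2)^3}+\tfrac{4m}{(1-x^2)^2}+\tfrac{m^2-2m+3}{1-x^2}$ and $\tfrac{4}{1+x^2}$ as integer polynomials modulo $Q_M$. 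Those are the missing inputs.

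Moreover, your supporting claim that the relevant resultant is ``a power of $2$ times a unit'' is false as stated when $4\mid p$: dividing out $\kappa_p=(1+x^2)^2$ one computes $(Q_M/\kappa_p)(\pm\sqrt{-1})=(p^2-20)/4$ (e.g.\ $11$ for $p=8$), so $\mathrm{Res}\bigl((x^2-1)^3(1+x^2),\,Q_M/\kappa_p\bigr)=\bigl((p^2-20)/4\bigr)^2$ has odd prime factors in general. The approach survives only because in that case the numerator $N(a)$ of \eqref{Eq:tor1} vanishes at $a=\pm\sqrt{-1}$, so the factor $1+a^2$ cancels and never needs to be inverted---a cancellation reflected in \eqref{25} by the vanishing of the $(1+a^2)^{-1}$-coefficient for even $m$, and absent from your write-up. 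As it stands, then, the proposal is a plausible plan in the spirit of the paper, but the $2$-adic bookkeeping that constitutes the actual content of the proof is missing.
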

\begin{proof}
Since the proof with $|2m| \leq 4$ is a direct computation, we may suppose $|2m| >4 $.

We will discuss integrality.
{We can easily verify some integral polynomials $ h(x),k(x) \in \Z[x]$ such that
\begin{equation}\label{kkk9} \frac{Q_M(x)}{(1-x^2)^3 }= h(x)+\frac{ m^2-2m+3}{1-x^2}+\frac{ 4m}{(1-x^2)^2}+\frac{ 4}{(1-x^2)^3} , \ \ \ \frac{Q_M(x)}{(1+x^2)^2 }= k(x)+\frac{ 2(1+(-1)^{m-1})}{1+x^2} . \end{equation}
In general, as is known as Newton's formula, the sum $\sum_{ b \in Q_M^{-1} (0) } b^n $ is an integer.
Thus, the sums $\sum_{\alpha \in Q_{M}^{-1}(0) \setminus \{\pm \sqrt{-1} \}  } h(\alpha)^n$ and $\sum_{\alpha \in Q_{M}^{-1}(0)  \setminus \{\pm \sqrt{-1} \} } k(\alpha)^n$ are integers;
Therefore, by \eqref{kkk9}, we can easily check that $\sum_{\alpha \in Q_{M}^{-1}(0)} 4^n (1 - \alpha^2)^{-n}$ and $\sum_{\alpha \in Q_{M}^{-1}(0)} 4^n (1 + \alpha^2)^{-n}$ are integers
by induction on $n$.

Let us complete the proof. Recall from Theorem \ref{thm2-1} the resulting computation of the torsion $\tau_{\varphi_a}(M) $ for $a \in Q_M^{-1} (0) \cap D$;
by the Euclidean Algorithm, we can show
\begin{equation}\label{25} 2 \tau_{\varphi_a}(M) =\ell(a)+ \frac{6+2m-2m^2-m^3}{1-a^2}+ \frac{-6+6m+2m^2}{(1-a^2)^2}+ \frac{-4m}{(1-a^2)^3} + \frac{m( -1+ (-1)^m ) }{2(1+a^2)}\end{equation}
for some $ \ell (a) \in \Z[a]$.
Notice from by Proposition \ref{prop1-1} that
\[ 2 \sum_{ \varphi \in R^{\rm irr}_G(M) } (8 \tau_{\varphi}(M))^n = -(8 \tau_{\varphi_{\sqrt{-1}}}(M))^n -(8 \tau_{\varphi_{\sqrt{-1}}}(M))^n +\sum_{ a \in Q_{M}^{-1}(0) \setminus \{\pm \sqrt{-1} \}  } (8 \tau_{\varphi_a}(M))^n.\]
The first and second terms are integers by an elementary discussion.
The last one is a sum of the above sums $ \sum_{\alpha \in Q_{M}^{-1}(0) } 8^n (1 \pm \alpha^2)^{ s}$, where $ s \leq 3n$.} Hence, the sum $2 \sum_{ \varphi \in R^{\rm irr}_G(M) } (8 \tau_{\varphi}(M))^n $ is an integer as required.
\end{proof}

\subsection*{Acknowledgments}
{The author is deeply grateful to the referee for providing insightful comments and constructive feedback.}
She sincerely expresses her gratitude to Takefumi Nosaka for encouragement and useful advice.
She thanks Teruaki Kitano, Yuta Nozaki, and Yoshikazu Yamaguchi for giving valuable comments.
She is also greatly indebted to Seokbeom Yoon for carefully reading the paper and his insightful discussions.

\bibliographystyle{abbrv}
\bibliography{references.bib}


%
%
%

\end{document}